\DeclareMathOperator*{\minimize}{minimize}
\DeclareMathOperator*{\argmin}{argmin}
\DeclareMathOperator*{\st}{subject\;to}
\def\begm{\ensuremath\begin{bmatrix}}
\def\endm{\ensuremath\end{bmatrix}}
\newcommand\zs{\ensuremath {{z^\star}}}
\newcommand{\R}[1]{\ensuremath {{\mathbb{R}^{#1}}}}
\newcommand\norm[1]{\ensuremath {{\left|\left|#1\right|\right|}}}
\newcommand\Oh{{$\mathcal{O}$}}
\newtheorem{thm}{Theorem}
\begin{document}

\runningtitle{Second-Order Sensitivity Analysis for Bilevel Optimization}

\runningauthor{Robert Dyro, Edward Schmerling, Nikos Ar\'echiga, Marco Pavone}

\twocolumn[
\aistatstitle{Second-Order Sensitivity Analysis for Bilevel Optimization}
\vspace*{-0.7cm}
\aistatsauthor{Robert Dyro \And Edward Schmerling \And Nikos Ar\'echiga \And Marco Pavone}
\aistatsaddress{Stanford University \\ \texttt{rdyro@stanford.edu}
  \And  Stanford University \\ \texttt{schmrlng@stanford.edu}
  \And Toyota Research Institute \\ \texttt{nikos.arechiga@tri.global}
\And Stanford University \\ \texttt{pavone@stanford.edu} }
]
\vspace*{-1.0cm}

\begin{abstract}
  \vspace*{-0.1cm}
In this work we derive a second-order approach to bilevel optimization, a type of mathematical programming in which the solution to a parameterized optimization problem (the ``lower'' problem) is itself to be optimized (in the ``upper'' problem) as a function of the parameters. Many existing approaches to bilevel optimization employ first-order sensitivity analysis, based on the implicit function theorem (IFT), for the lower problem to derive a gradient of the lower problem solution with respect to its parameters; this IFT gradient is then used in a first-order optimization method for the upper problem. This paper extends this sensitivity analysis to provide second-order derivative information of the lower problem (which we call the IFT Hessian), enabling the usage of faster-converging second-order optimization methods at the upper level. Our analysis shows that (i) much of the computation already used to produce the IFT gradient can be reused for the IFT Hessian, (ii) errors bounds derived for the IFT gradient readily apply to the IFT Hessian, (iii) computing IFT Hessians can significantly reduce overall computation by extracting more information from each lower level solve. We corroborate our findings and demonstrate the broad range of applications of our method by applying it to problem instances of least squares hyperparameter auto-tuning, multi-class SVM auto-tuning, and inverse optimal control.

\end{abstract}

\vspace*{-0.1cm}
\section{INTRODUCTION}\label{sec:introduction}

\begin{figure}[!t]
\centering
\includegraphics[width=0.95\linewidth]{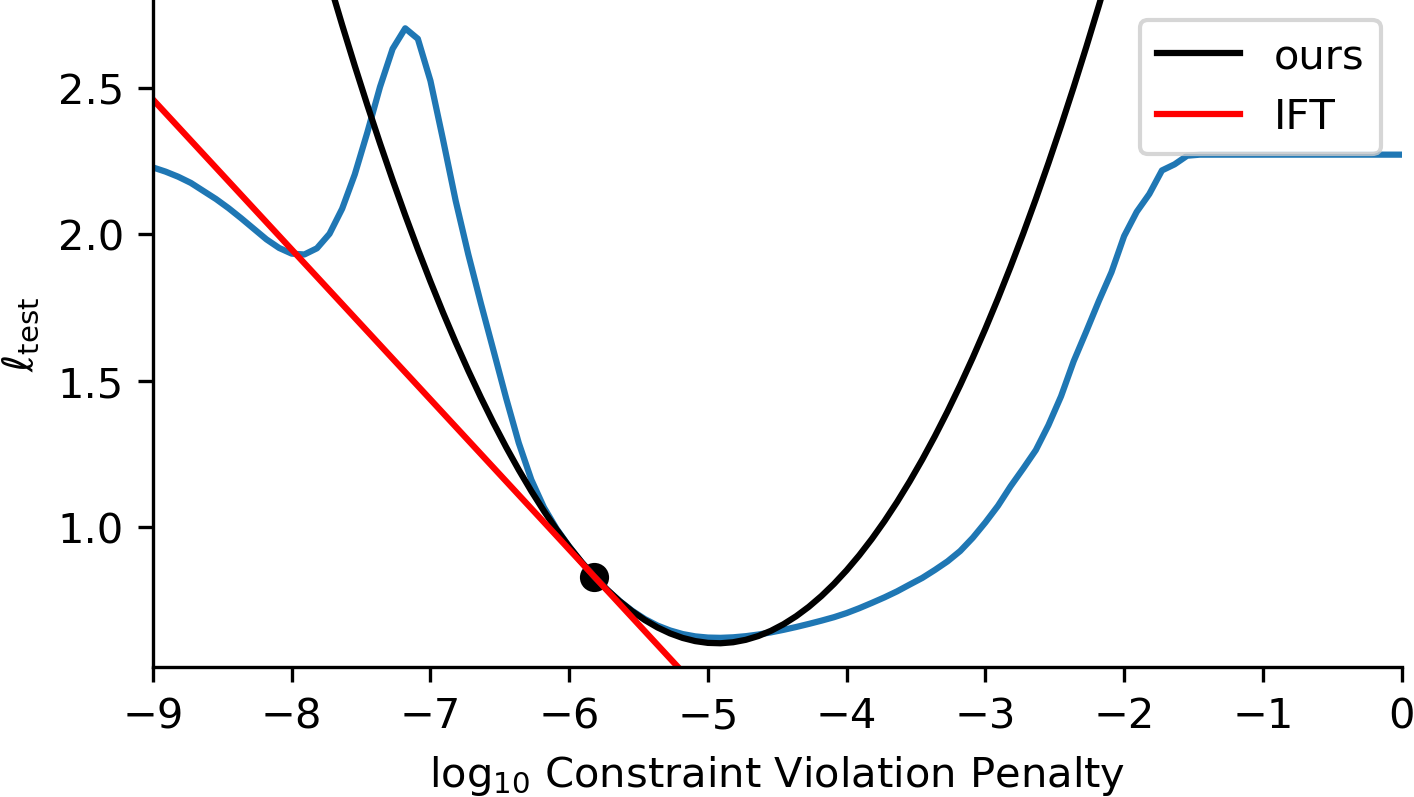}
\caption{The single \mbox{(hyper-)parameter} test loss landscape of a multi-class SVM on Fashion-MNIST. Evaluating a point on this curve takes $\sim$100 seconds. We obtain a local quadratic approximation which leads to a much faster \mbox{(hyper-)parameter} optimization.}
\label{fig:abstract}
\vspace*{-0.4cm}
\end{figure}

Optimization is the foundation of modern learning and decision making systems, therefore a natural problem of interest is how to improve, learn, or optimize the optimization itself.
Many practitioners of autonomous driving, robotics, and machine learning employ optimization on an everyday basis. Understanding how best to adjust this tool to more accurately suit their application needs is key to improving performance, trust, and reliability of these systems.

A natural way to approach improving optimization comes through formulating a bilevel program---users solve an optimization problem constructed with given data and parameters, and rely on a secondary metric quantifying the quality of the optimization result (it is of course optimal with respect to its own objective) to inform system design. Examples include (i) tuning regularization of a regression model to give good results on the test set, while training it to optimality on the train set, (ii) designing an autonomous car that drives in a human-like fashion, where it optimizes a finite horizon trajectory planning problem at every time step, (iii) setting parallel auction prices in such a way that rational bidding (an optimization in itself) leads to highest auction holder revenue. Any time optimization or decision making is applied, the question of selecting the right \mbox{(hyper-)parameters} arises in order to obtain, for example, (i) statistical models which generalize better, (ii) autonomous agents that behave more like an expert, (iii) auction systems that cannot be exploited. This notion is formalized as a bilevel program in which the \emph{optimization-to-be-improved} represents the lower level:
\begin{equation}\label{eq:bilevel}
\begin{aligned}
\minimize_p ~~~ & f_U(\zs, p) \\
\st ~~~ & \zs = \argmin_{z \in \mathbb{Z}} f_L(z, p).
\end{aligned}
\end{equation}
This bilevel program formulation is general and subsumes the problems of test set model generalization, Stackelberg competition \citep{von2010market}, meta-learning \citep{finn2017model} and few-shot learning \citep{lee2019meta}.
The quality of the optimization result, $\zs$, of the objective $f_L$ is quantified via the upper objective, $f_U$. In the example of a statistical model, $f_L$ represents the loss on the train dataset and $f_U$ the loss on the test dataset. Solving the bilevel program requires selecting parameters $p$ which produce such $\zs$ that together lead to the minimal upper level loss $f_U$.\footnote{When $\zs$ itself has an interpretation as ``parameters'', e.g., in model learning, $p$ may be referred to as ``hyperparameters''; we will refer to $p$ as parameters throughout the remainder of this domain-agnostic work.}

This solution is often approximated by selecting parameters $p$ by hand or via grid search. However, these approaches suffer from (a) being limited to cases where the dimension of $p$ is low (usually below 4), (b) requiring parallel computing resources to keep re-evaluating $\zs(p)$ and most critically, (c) the search is often done manually and wastes the expert's or practitioner's time.

A more principled way of solving Problem~\eqref{eq:bilevel} is to use derivative information and make use of general-purpose solvers developed for optimization problems. This, however, requires the derivative of $\zs$ with respect to $p$---quantifying how small changes in $p$ affect the upper level objective $f_U$ not just directly, but also by influencing $\zs$. Although a closed-form expression of $\zs$ with respect to $p$ rarely exists, because $\zs$ is the result of optimization, the dependence is \emph{implicitly} defined via the necessary conditions for optimality of the lower optimization. For smooth-in-parameter problems, the implicit function theorem (IFT) may be employed to compute this derivative information relevant to solving the upper problem.

The availability of gradient expressions in bilevel programming obtained via sensitivity analysis enables the use of existing powerful optimizer to tackle these problems when they arise in real-world applications. However, even though many optimization algorithms use second-order information to converge fast in cases where the forward function evaluation is the bottleneck, not much attention has been paid to extending sensitivity analysis to second-order information. Doing so would enable another class of faster optimization algorithms to be applied to bilevel programming.

The theoretical application of sensitivity analysis to bilevel programming relies on exact solutions to the lower level problems, but in reality, the numerical limitations rarely allow for that. Existing literature on first-order methods thus focuses on showing that the error in the derivative can be bounded and goes to zero as the approximation approaches the solution to the lower level problem.

\subsection{Contributions}

In this work we extend the application of the implicit function theorem, where \emph{gradients} of inner optimization result with respect to parameters are found as in many existing works, \citep{gould2016differentiating, agrawal2019cone, barratt2018differentiability}, and derive second-order derivatives, i.e., the IFT Hessian. We leverage this result for three main contributions: (i) We show that the computational complexity of obtaining second-order derivatives is, in many cases, still dominated by the same matrix inversion bottleneck required for the IFT gradient and so our method can be implemented equally efficiently. (ii) We analyze our IFT Hessian expression to derive computational complexity and error bound expressions. We derive a new form of the regularized error bound under diagonal regularization of the matrix inverse operation in the application of IFT. (iii) We use our second-order derivative expression to apply second-order optimization methods to two machine learning datasets and show that these methods lead to faster bilevel optimization, requiring fewer lower level problem evaluations. 

We then further discuss the practical limitations and advantages of second-order optimization for bilevel optimization.

We open-source our implementation in two popular machine learning/scientific computing/automatic differentiation frameworks, PyTorch\footnote{\href{https://pytorch.org/}{pytorch.org}} and JAX\footnote{\href{https://github.com/google/jax}{github.com/google/jax}}, in a user-friendly format at 
\url{https://github.com/StanfordASL/sensitivity\_torch}
and 
\url{https://github.com/StanfordASL/sensitivity\_jax}.

\subsection{Related Work}\label{sec:related_work}

\paragraph{Practical Deployment} Optimization improvement or optimization tuning has a long history in practical applications. For systems for which gradient derivation is non-trivial or more generally for systems where local gradient information is not informative of the global scope of the problem, gradient-free proxy models may be employed as in \citet{golovin2017google}. Like hand-tuning or grid search, this approach constrains the number of parameters that can practically be tuned to single or low double digits.

\paragraph{Formal Literature} More formally the Bilevel Programming Problem (BLPP) formulation has a long history in literature \citep{bard1984optimality, von2010market, henrion2011calmness, liu2001exact, bard2013practical}. \citet{sinha2017review} contains an extensive review of approaches to solving BLPPs.
Many of these works focus on theoretical analysis/characterization of BLPP approaches; the focus of this paper is more on specific concrete applications.

\paragraph{Applications-oriented} Renewed interest in applications-oriented gradient-based solutions to BLPPs \citep{bengio2000gradient}, led to several works establishing the techniques for obtaining lower level solution gradients with respect to the parameters \citep{gould2016differentiating} and doing so efficiently for convex problems \citep{barratt2018differentiability, agrawal2019cone, agrawal2019layers}. Several computationally optimized, program-form specific approaches have been shown \citep{amos2017optnet, amos2018differentiable}. Most recently \citet{lorraine2020optimizing, blondel2021efficient} apply the techniques to large-scale programs. Most of these applications-oriented works focus on deriving gradient expressions---to be used with a gradient-only BLPP optimizer. These works do not consider the loss landscape or the local curvature of the BLPP; in contrast, in this work, we attempt to quantify that.

\paragraph{Machine Learning} BLPPs also found applications in meta-learning literature \citep{andrychowicz2016learning, finn2017model, harrison2018meta, bertinetto2018meta} with several works making explicit use of the implicit function theorem \citep{lee2019meta, rajeswaran2019meta}. While meta-learning literature poses an important application for BLPP, so far little attention has been given to improving the specifics of the solution methods employed in this body of work.

\paragraph{Higher-Order Derivatives} Works most closely related to ours, with a focus on finding higher derivative information and analyzing the curvature of the BLPP are \citet{wachsmuth2014differentiability, mehlitz2021sufficient}. These works do not demonstrate how to efficiently compute second order derivatives or demonstrate their usage in Newton's-Method-like optimization, two key focuses of our work.

\subsection{Notation} 
For an argument $x \in \mathbb{R}^{d}$ we denote the dimension of $x$ by $\dim(x) = d$. For a scalar function $f : \mathbb{R}^{\dim(x)} \rightarrow \mathbb{R}$ we denote its gradient and Hessian as $\nabla_x f(x)$ and $\nabla^2_x f(x)$. For a vector function of $g : \mathbb{R}^{\dim(x)} \rightarrow \mathbb{R}^{\dim(g)}$ we denote its Jacobian matrix as $D_x g(x) \in \mathbb{R}^{\dim(g) \times \dim(x)}$. Where a function takes two arguments, we use the normal and mono-space font respectively to denote whether the differentiation operator is partial or total (i.e. does the derivative capture dependence between variables), e.g. for a function $g : \mathbb{R}^{\dim(x)} \times \mathbb{R}^{\dim(y)} \rightarrow \mathbb{R}^{\dim(g)}$, $D_x g(x, y)$ denotes the partial Jacobian of $g$ w.r.t. $x$ and $\mathtt{D}_x g(x)$ the total Jacobian of $g$ w.r.t. $x$ accounting for possible dependence of $x$ on $y$. In this work, we aim to describe higher order derivatives of vector functions, which leads us to define, for $g : \mathbb{R}^{\dim(x)} \times \mathbb{R}^{\dim(y)} \rightarrow \mathbb{R}^{\dim(g)}$, $D_{xy} g \in \mathbb{R}^{\dim(g)\dim(x) \times \dim(y)}$, represented as a two dimensional matrix s.t. 
\begin{equation}
D_{x y} g(x, y) = \begin{bmatrix}
D_y \big(\nabla_x \left( g(x, y)_1 \right) \big) \\
\vdots \\
D_y \big(\nabla_x \left(g(x, y)_{\dim(g)} \right) \big) \\
\end{bmatrix}
\end{equation}
where $g(x, y)_i$ denotes the $i$-th scalar output of the vector function $g$. We define the operator $H_x \equiv D_{x x}$ (and the total version $\mathtt{H}_x \equiv \mathtt{D}_{x x}$). We use $\otimes$ to denote the Kronecker product.

\section{PROBLEM STATEMENT}\label{sec:problem}

Recall the formulation of bilevel programming:
\begin{equation}
\begin{aligned}
\minimize_p ~~~ & f_U(\zs, p) \\
\st ~~~ & \zs = \argmin_{z \in \mathbb{Z}} f_L(z, p).
\end{aligned}\tag{1}
\end{equation}
The problem considered in this work is how to obtain an explicit expression for the total second-order derivative, the Hessian matrix, of $f_U$ with respect to $p$, i.e., $\mathtt{H}_p f_U(\zs, p)$. Access to this Hessian enables the application of second-order optimization methods for solving the upper problem, with the aim of reducing the total number of lower problem optimizations (and correspondingly, total overall computation).

\section{METHOD}\label{sec:method}

\subsection{Preliminaries}

\paragraph{Necessary Derivatives}
As overviewed in Section~\ref{sec:related_work}, there are several approaches to solving the bilevel problem~\eqref{eq:bilevel}. Here, we focus on derivation of the first and second derivatives of $f_U$ w.r.t. $p$: $\mathtt{D}_p f_U$ and $\mathtt{H}_p f_U$. Because $\zs$ depends on $p$, the total derivatives of $f_U$ can be written as
\begin{align}
\label{eq:chain_1st}
\mathtt{D}_p f_U(\zs, p) =& ~ D_p f_U + \big(D_\zs f_U \big) \big(D_p \zs \big) \\
\mathtt{H}_p f_U(\zs, p) =& ~ H_p f_U + \big(D_p \zs)^T \big(H_\zs f_U\big) \big(D_p \zs) \nonumber \\
\label{eq:chain_2nd}
& + \big(\big(D_\zs f_U\big) \otimes I\big) H_p \zs 
\end{align}
Importantly, Equations \eqref{eq:chain_1st} and \eqref{eq:chain_2nd} depend on (i) terms \emph{directly obtainable} from the upper objective function via analytical or automatic differentiation: $D_p f_U$, $H_p f_U$, $D_\zs f_U$, $H_\zs f_U$ and (ii) terms quantifying the sensitivity of lower level optimization, i.e., the result $\zs$ w.r.t. $p$: $D_p \zs$, $H_p \zs$.

The latter two terms are nominally obtainable through automatic differentiation by unrolling the entire $f_L$ optimization process---in many cases this is computationally undesirable or simply too memory intensive to be feasible. An alternative way of obtaining optimization sensitivity terms follows from the IFT.

\paragraph{Implicit Function} The main insight which allows differentiating $\zs = \argmin_z f_L(z, p)$ is the implicit condition imposed on $\zs$ as a result of $\zs$ being an optimal solution. An optimal solution must satisfy First Order Optimality Conditions (FOOC)
\begin{equation}
k(\zs, p) = 0
\end{equation}
which define an implicit equation for $\zs$.\footnote{An \emph{implicit function} is simply defined as a function $k : \mathbb{R}^m \times \mathbb{R}^n \rightarrow \mathbb{R}^m$ that equals 0 if its first input $x$ satisfies the implicit definition: $x \in \{x \mid k(x, y) = 0, ~ \exists y \in \mathbb{R}^n\}$. This is in contrast to an \emph{explicit function} of the form $f : \mathbb{R}^n \rightarrow \mathbb{R}^m$: $x \in \{x \mid f(y) = x\}$.} For unconstrained optimization, the FOOC we consider are explicitly:
\begin{equation*}
k(\zs, p) = D_\zs f_L(\zs, p) = 0.
\end{equation*}

The following first-order sensitivity analysis is standard in the literature; we reproduce it here to motivate and enable our second-order analysis.

\subsection{Implicit Function Theorem (IFT)}

\begin{thm}[Implicit Function Theorem (IFT)]
Let $k: \mathbb{R}^{m} \times \mathbb{R}^{n} \rightarrow \mathbb{R}^m$ be a continuously differentiable multivariate function of two variables $\zs \in \mathbb{R}^m$ and $p \in \mathbb{R}^n$ such that $k$ defines a fixed point for $\zs$, i.e., $k(\zs, p) = 0$ for all values of $p \in \mathbb{R}^n$. Then, the derivative of (implicitly defined) $\zs$ w.r.t. $p$ is given by
\begin{equation*}
D_p \zs = -\big( D_\zs k(\zs, p) \big)^{-1} D_p k(\zs, p).
\end{equation*}
\end{thm}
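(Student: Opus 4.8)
The plan is to obtain the formula by differentiating the defining identity $k(\zs, p) = 0$, rather than by constructing $\zs(p)$ from scratch. Since the statement already posits $\zs$ as an (implicitly defined, differentiable) function of $p$ satisfying $k(\zs(p), p) = 0$ for every $p \in \mathbb{R}^n$, the left-hand side is the zero map as a function of $p$, so its total derivative with respect to $p$ must vanish identically. The entire proof then reduces to expanding that total derivative with the chain rule and solving the resulting linear system for $D_p \zs$.

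First I would apply the total-derivative operator $\mathtt{D}_p$ to both sides of $k(\zs(p), p) = 0$. Because $k$ depends on $p$ both directly through its second argument and indirectly through $\zs(p)$, the chain rule gives
\begin{equation*}
\mathtt{D}_p\, k(\zs(p), p) = D_\zs k(\zs, p)\, D_p \zs + D_p k(\zs, p) = 0,
\end{equation*}
where I would check dimensions to confirm the expression is well posed: $D_\zs k(\zs,p) \in \R{m\times m}$, $D_p\zs \in \R{m \times n}$, and $D_p k(\zs,p) \in \R{m\times n}$, so every summand lives in $\R{m\times n}$. Rearranging yields the linear system $D_\zs k(\zs,p)\, D_p\zs = -D_p k(\zs,p)$, and left-multiplying by the inverse of $D_\zs k(\zs, p)$ produces the claimed
\begin{equation*}
D_p \zs = -\big(D_\zs k(\zs, p)\big)^{-1} D_p k(\zs, p).
\end{equation*}

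The step I expect to be the true obstacle is justifying the invertibility of $D_\zs k(\zs, p)$ and, more fundamentally in a fully rigorous treatment, establishing that a differentiable $\zs(p)$ exists at all in a neighborhood of a given solution---the chain-rule manipulation above silently assumes both. In the classical implicit function theorem this existence-and-differentiability claim is the heavy part of the argument, typically proved via a contraction-mapping (Banach fixed point) or inverse-function-theorem construction that relies precisely on the nonsingularity of $D_\zs k$. For the purposes of this paper I would treat that regularity as a hypothesis, as the statement does, and note its concrete meaning in the application at hand: with the FOOC choice $k = D_\zs f_L$, the required matrix is $D_\zs k = H_\zs f_L$, the Hessian of the lower objective, so its invertibility is exactly a second-order sufficiency condition certifying that $\zs$ is a nondegenerate local minimizer. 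This is the same nonsingularity assumption that the subsequent second-order (IFT Hessian) analysis will need to reuse.
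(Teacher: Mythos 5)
Your proof is correct and follows essentially the same route as the paper's: differentiate the identity $k(\zs(p), p) = 0$ totally with respect to $p$ via the chain rule, then solve the resulting linear system for $D_p \zs$ under the (assumed) invertibility of $D_\zs k(\zs, p)$. Your added remarks on dimension checking and on where the classical existence-and-differentiability burden actually lies are accurate but go beyond what the paper's one-line proof addresses.
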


\begin{proof}
Take $k(\zs, p) = 0$ and apply the chain rule to differentiate w.r.t. $p$, then, if $D_\zs k(\zs, p)$ is invertible
\begin{align}
k(\zs, p) &= 0\nonumber \\ 
D_p k(\zs, p) + \big( D_\zs k(\zs, p) \big) \big( D_p \zs \big) &= 0 \nonumber \\
-\big( D_\zs k(\zs, p) \big)^{-1} \big( D_p k(\zs, p) \big) &= D_p \zs.
\label{eq:diff_1st}
\end{align}
\end{proof}

Understanding the proof of the equation above is vital to obtaining higher derivatives via the IFT, since under suitable smoothness assumptions, Equation~\eqref{eq:diff_1st} can be differentiated again and the resulting expression solved for $H_p \zs$.

We now present the rarely derived IFT Hessian, which allows us to obtain the second derivative through an optimization. This result follows from a repeated application of the IFT to the same implicit function to obtain higher derivatives of $\zs$ w.r.t. $p$.

\subsection{Second-Order IFT}

\begin{thm}[Second-Order IFT]\label{thm:hessian}
Let $k: \mathbb{R}^{m} \times \mathbb{R}^{n} \rightarrow \mathbb{R}^m$ be a twice continuously differentiable multivariate function of two variables $\zs \in \mathbb{R}^m$ and $p \in \mathbb{R}^n$ such that $k$ defines a fixed point for $x$, i.e. $k(\zs, p) = 0$ for all values of $p \in \mathbb{R}^n$. Then, the Hessian of (implicitly defined) $\zs$ w.r.t. $p$ is given by
\begin{equation}\label{eq:2nd_hess}
\begin{aligned}
H_p \zs = -\bigg[ \big(D_\zs k \big)^{-1} \otimes I \bigg] \bigg[ 
H_{p} k + \big( D_{p \zs} k \big) \big( D_p \zs \big) \\
~~~~~~~~~~~~+ \big( I \otimes \big(D_p \zs\big)^T \big) \big( D_{\zs p} k \big) \\
~~~~~~~~~~~~+ \big( I \otimes \big(D_p \zs\big)^T \big) \big( H_{\zs} k \big) \big( D_p \zs \big) \bigg].
\end{aligned}\tag{7}
\end{equation}
\end{thm}

The proof of Theorem~\ref{thm:hessian} is provided in the \hyperref[sec:appendix]{Appendix}.

\section{ANALYSIS}\label{sec:analysis}

\subsection{Computational Complexity}\label{sec:comp_complex}
The expression for the Second-Order IFT features Kronecker product terms (denoted by $\otimes$) which serve to broadcast over dimensions of either the embedding $\dim(z)$ or the parameter $\dim(p)$. Efficiently broadcasting over a particular dimension does not require constructing full dense matrices. 

The two broadcasting Kronecker product-based forms that appear in Equation~\eqref{eq:chain_2nd} are of the form $A \otimes I$ and $I \otimes B$ which both correspond to a broadcasted version of matrix multiplication. We refer the interested reader to the \hyperref[sec:appendix]{Appendix} for the discussion of how computation with these forms can be accomplished efficiently.

Since our analysis here focuses on sensitivity analysis for optimization, we devote most of our attention to analyzing the computational complexity of computing the expression in Equation~\eqref{eq:chain_2nd}, which we recall here in full
\begin{align*}
\mathtt{H}_p f_U(\zs, p) =& ~ H_p f_U + \big(D_p \zs)^T \big(H_\zs f_U\big) \big(D_p \zs) \\
& + \big(\big(D_\zs f_U\big) \otimes I\big) H_p \zs 
\end{align*}
where $H_p f_U$, $H_\zs f_U$ and $D_\zs f_U$ have explicit expressions and the term $D_p \zs$ is either already available to us from first-order analysis or can be computed cheaply since the matrix $D_\zs k$ had to be factorized for first-order analysis. Thus, the only term requiring significant computation, substituting Equation~\eqref{eq:2nd_hess}, is
\begin{align*}
\big(\big(D_\zs f_U\big) \otimes I\big) \underbrace{\bigg[ \big(D_\zs k \big)^{-1} \otimes I \bigg] \bigg[ \dots \bigg]}_\text{Equation~\eqref{eq:2nd_hess}} = \\
\bigg[ \left( D_\zs f_U \big(D_\zs k \big)^{-1}\right) \otimes I \bigg] \bigg[ \dots \bigg]
\end{align*}

Since the upper level objective is a scalar by definition, the product 
\begin{equation}
\label{eq:sens_vec}
D_\zs f_U \big(D_\zs k \big)^{-1} = v^T \in \mathbb{R}^{1 \times m}
\end{equation}
can be computed at the cost of a single matrix solve using an already necessarily factorized matrix from first-order analysis, \emph{which can be done computationally cheaply.} Thus, evaluating the term $\big(\big(D_\zs f_U\big) \otimes I\big) H_p \zs$ reduces to (a) caching a term from first-order analysis and (b) broadcasted vector-matrix product (a weighted summation of matrices). Alternatively, if an automatic differentiation system is used to compute the right bracket in Equation~\eqref{eq:2nd_hess}, then $v$ can be used as a sensitivity vector in vector-Jacobian or Jacobian-vector products, significantly reducing the number of calls to the automatic differentiation (autodiff) engine.

\subsubsection{Big-\Oh ~Notation}

{
\renewcommand{\arraystretch}{1.1}
\begin{table}[h]
\begin{tabular}{l | l}
Operation & Computational Complexity \\ \hline
1st IFT & \Oh$\left(m^3 + m n \right)$ \\
1st IFT w/ sens. & \Oh$\left(m^3 + n\right)$ \\
2nd IFT & \Oh$\left(m^3 + m^2 n^2 \right)$ \\
2nd IFT w/ sens. & \Oh$\left(m^3 + m n^2 \right)$
\end{tabular}
\caption{Computational complexity of applying the first- and second-order implicit function theorem, omitting computation of partial explicit derivatives.
In this table we define $m = \dim(z)$, $n = \dim(p)$. ``w/ sens.'' denotes that a sensitivity vector is available in which case the matrix inverse can be computed for a single left hand side as in Equation~\eqref{eq:sens_vec}.}
\label{tab:comp_complex}
\end{table}
}
Following Section~\ref{sec:comp_complex} we show computational complexity of applying the first- (1st) and second-order (2nd) implicit function theorem in Table \ref{tab:comp_complex} and observe that the second-order expression with a sensitivity left-hand side---the term $D_\zs f_U$ in Equations \eqref{eq:chain_1st} \eqref{eq:chain_2nd}---has a computational complexity that differs from first-order expression with a sensitivity left hand side only by the additional quadratic term in $n = \dim(p)$, which is expected as the resulting matrix is in $\mathbb{R}^{n \times n}$. The increase in computational complexity is thus minor. Here we use the computational complexity of a matrix factorization operation to be \Oh$\left(m^3\right)$.

We do not include the computational complexity of obtaining the partial derivatives of $f_U$ and $k$, which we denote as \emph{partial explicit derivatives}, primarily because: they are heavily problem dependent; they vanish; can be precomputed; efficient analytical expressions exist; or they can be obtained by efficiently making use of Jacobian-vector and vector-Jacobian products in an automatic differentiation engine.

\subsection{Error Analysis}

Error analysis is vital for sensitivity analysis because numerical limitations often result in lower level solutions that are not quite optimal, but are instead at a small distance from the optimum. Developing confidence in the sensitivity methods described here requires the ability to bound the error caused by applying sensitivity analysis developed for optimal points to suboptimal lower level solutions.

\begin{figure*}[!t]
\centering
\subfigure[\texttt{RR}]{
\includegraphics[width=0.31\linewidth]{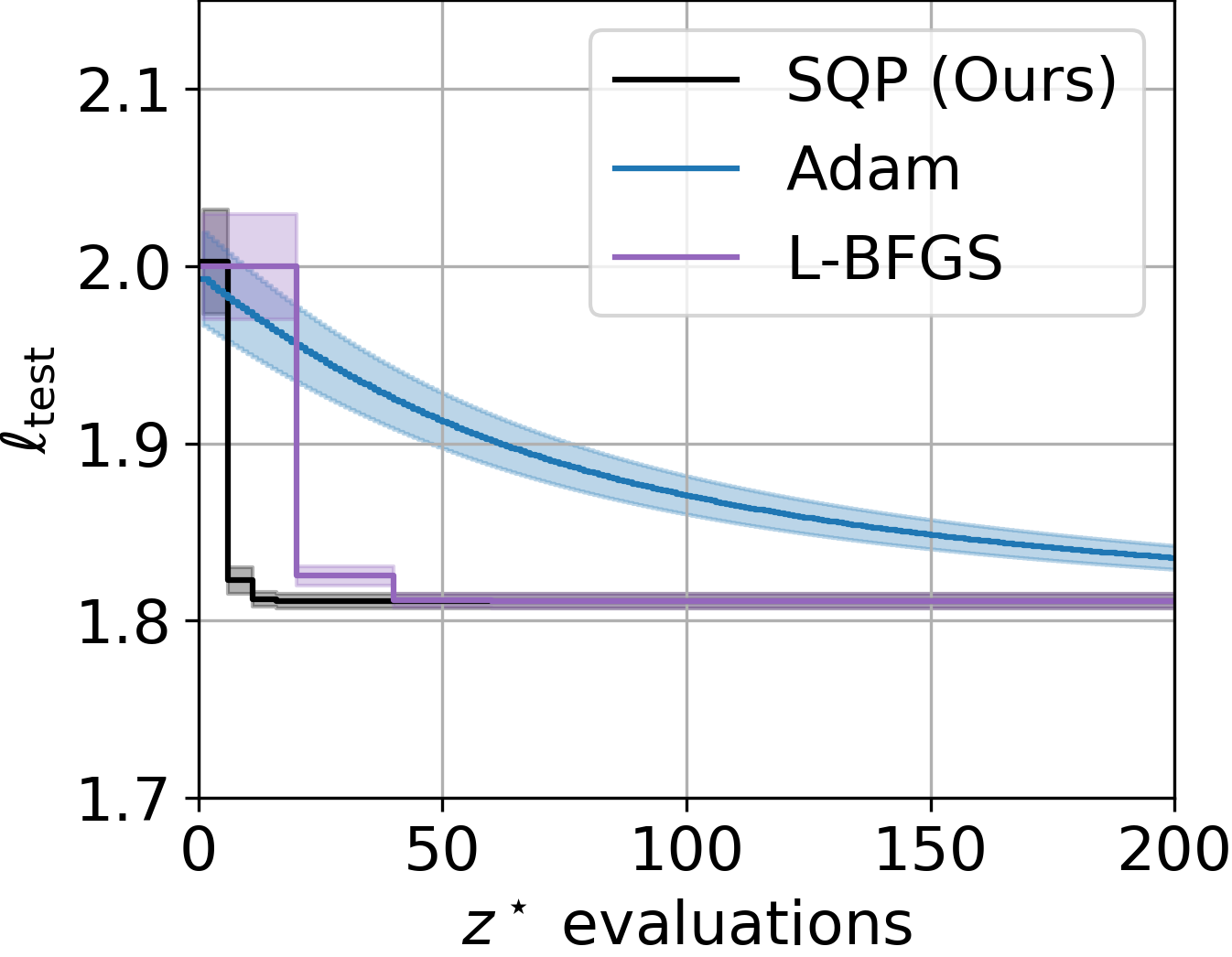}
}
\subfigure[\texttt{diag}]{
\includegraphics[width=0.31\linewidth]{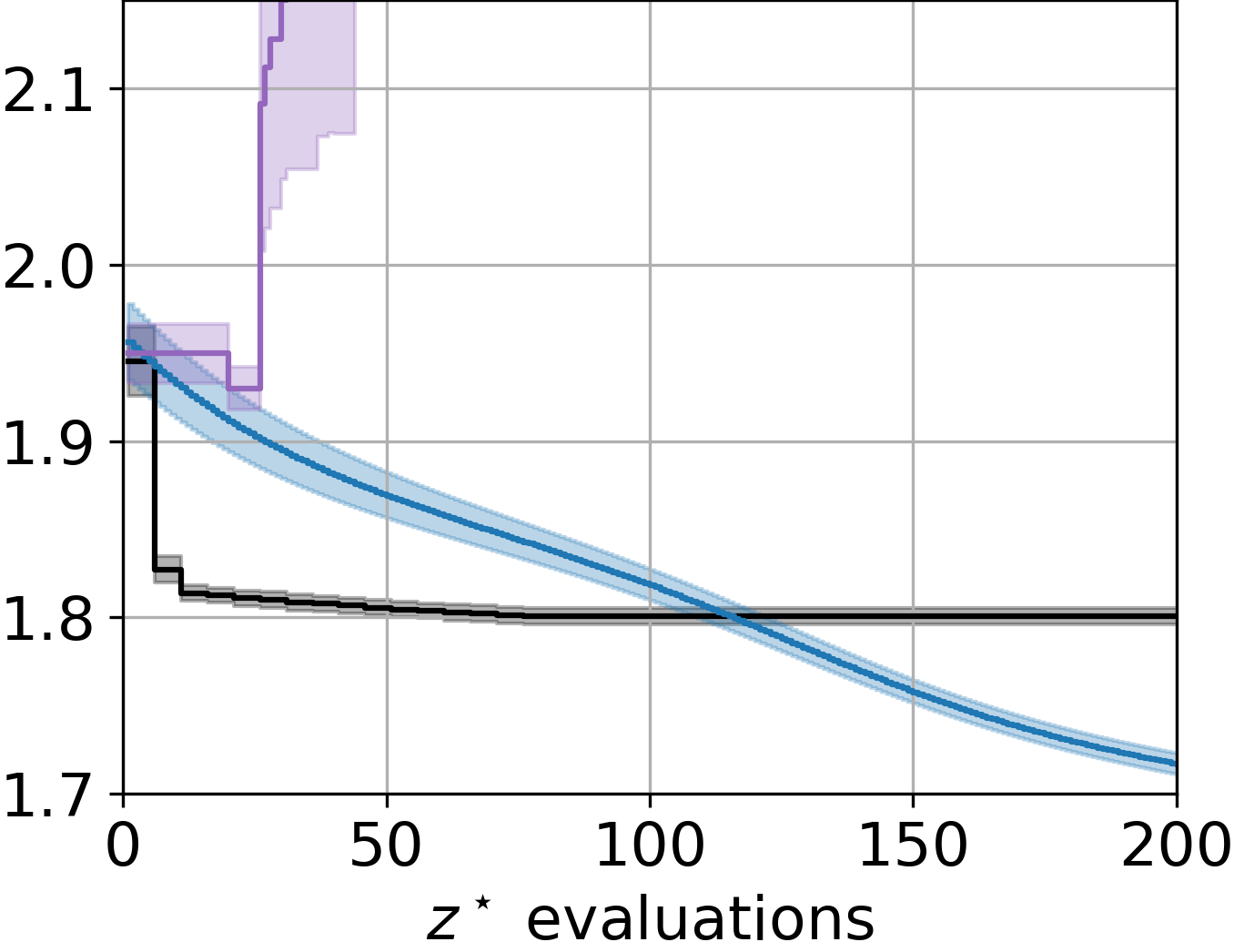}
}
\subfigure[\texttt{conv}]{
\includegraphics[width=0.31\linewidth]{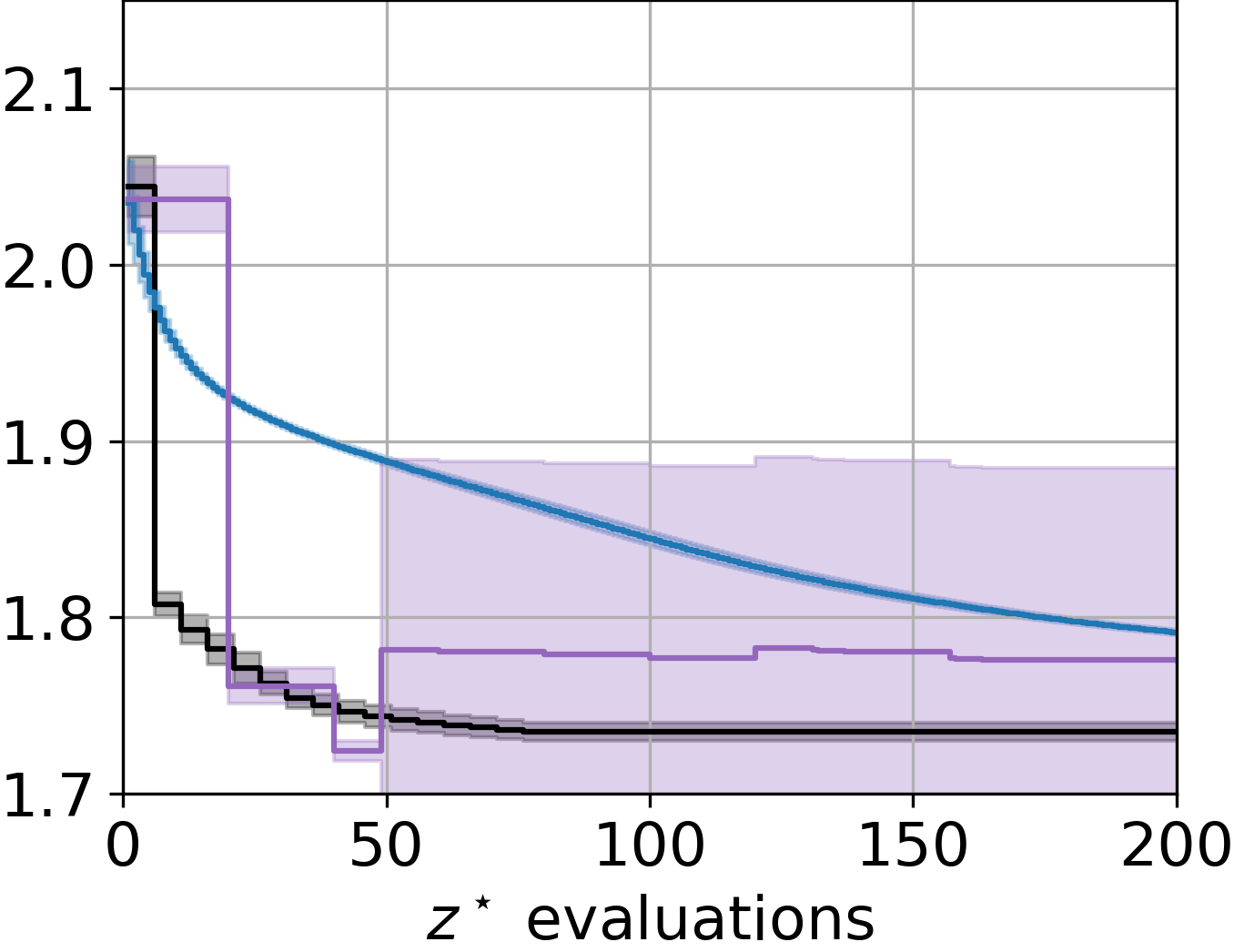}
}
\vspace*{-0.3cm}
\caption{Hyperparameter optimization of least-squares models with two gradient-only algorithms and one gradient \& Hessian---enabled by this work.}
\label{fig:linear_models_loss}
\end{figure*}

\begin{figure*}[!t]
\centering
\subfigure[\texttt{RR}]{
\includegraphics[width=0.31\linewidth]{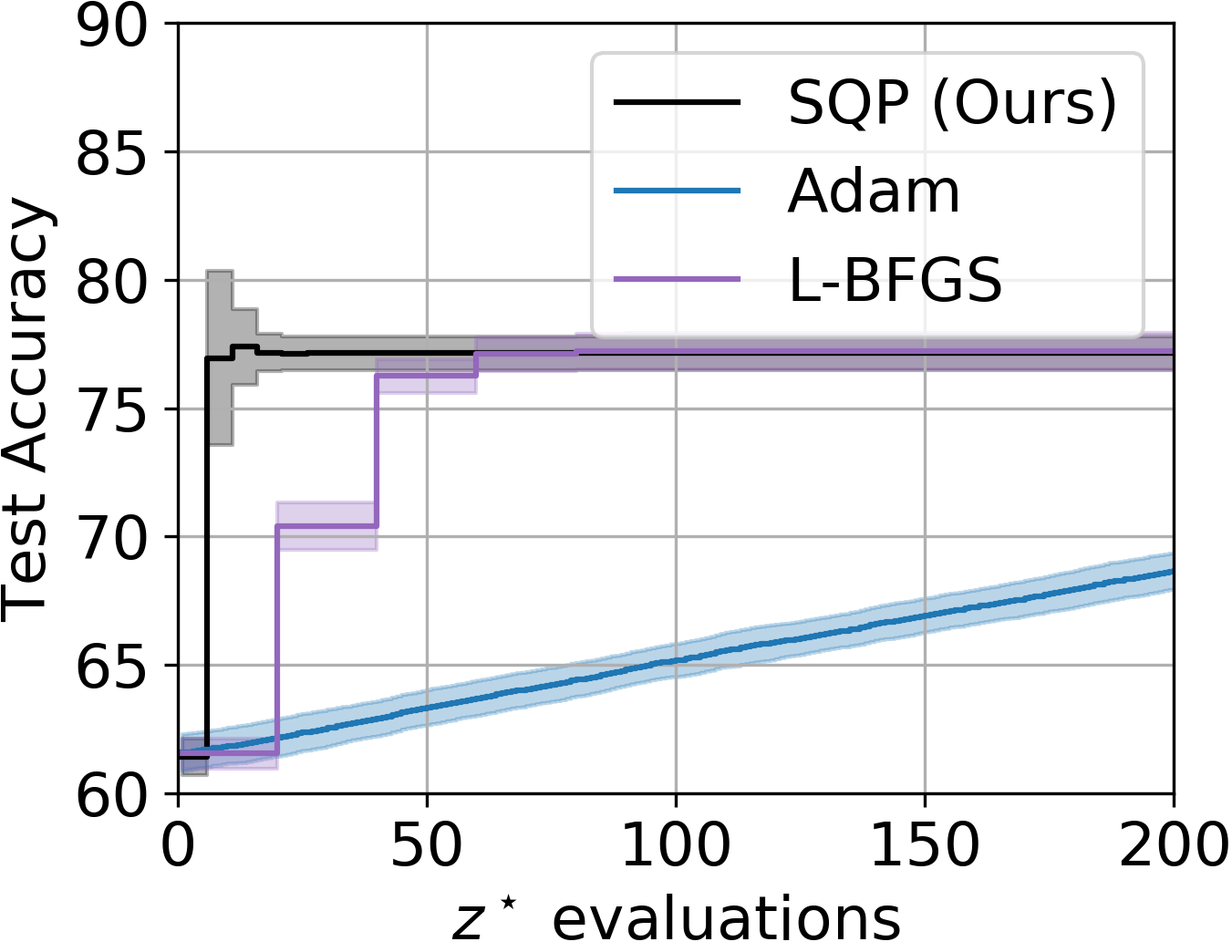}
}
\subfigure[\texttt{diag}]{
\includegraphics[width=0.31\linewidth]{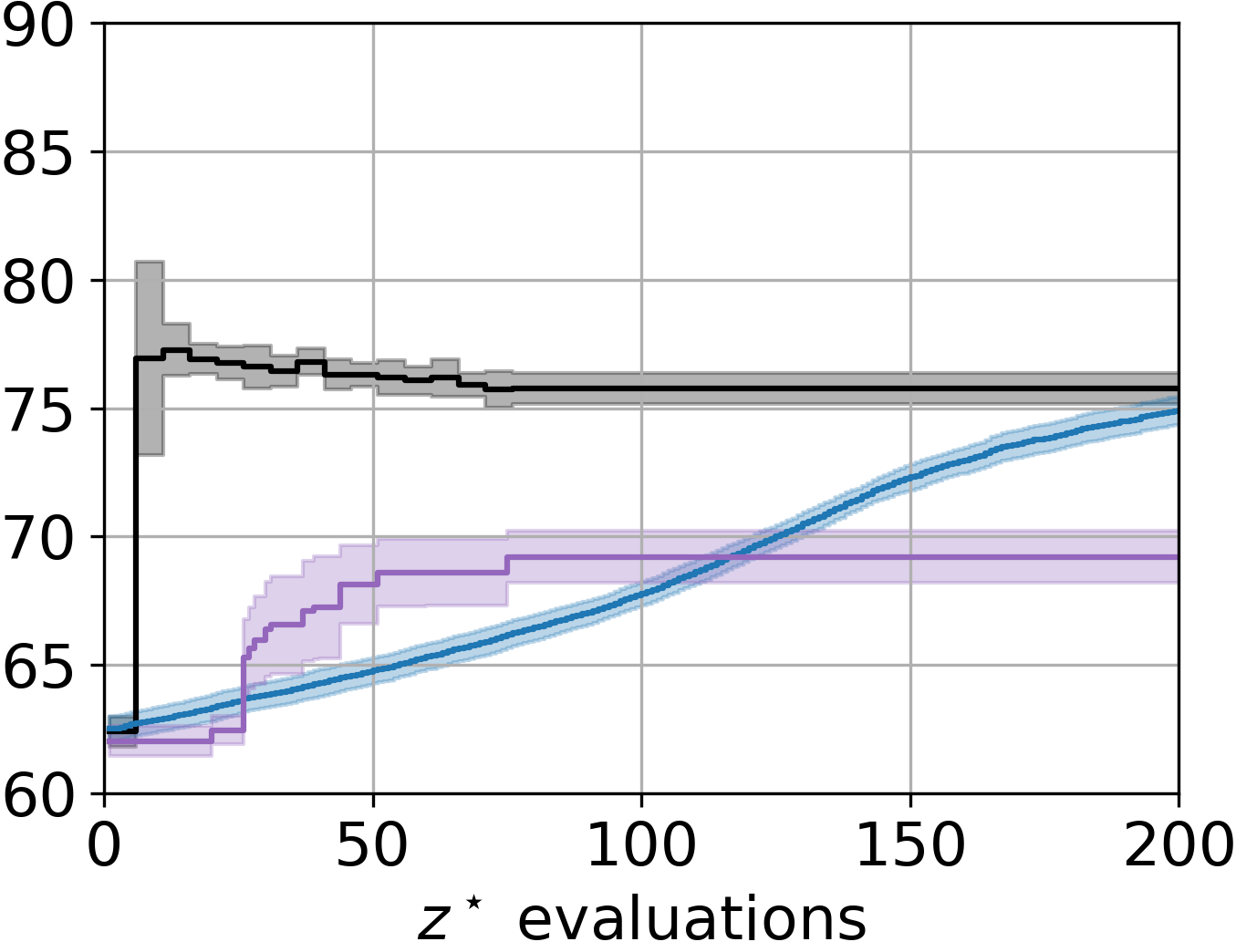}
}
\subfigure[\texttt{conv}]{
\includegraphics[width=0.31\linewidth]{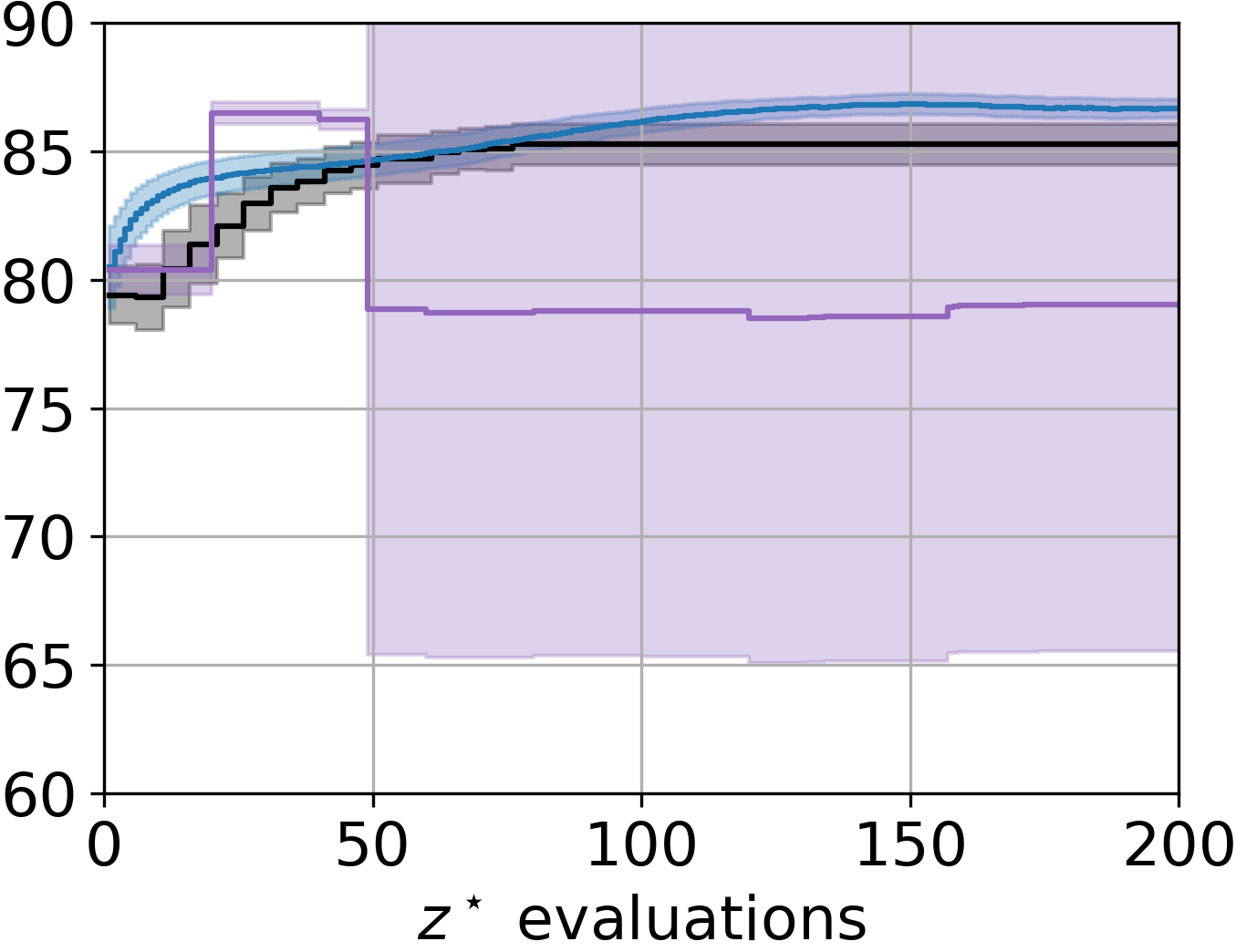}
}
\vspace*{-0.3cm}
\caption{Hyperparameter optimization of least-squares models with two gradient-only algorithms and one gradient \& Hessian---enabled by this work.}
\label{fig:linear_models_acc}
\end{figure*}


\subsubsection{First-Order Error Bound}

Assuming an inexact local solution, we state the following error bound on the Jacobian. Theorem 3 is heavily inspired by \citet{blondel2021efficient}, Theorem 1, which is in turn inspired by \citet{higham2002accuracy}, Theorem 7.2.
\begin{thm}[First-Order Error Bound]\label{thm:error1}
Given the result of IFT applied to an exact solution $D_\zs p = g = -A^{-1} B$, where $A(\zs, p) = D_\zs k(\zs, p)$, $B(\zs, p) = D_p k(\zs, p)$ and an inexact solution $\widetilde{A}(z, p) = D_z k(z, p)$ (where $k(z, p) \neq 0$) and $\widetilde{B}(z, p) = D_p k(z, p)$. Assume $\norm{z - \zs} \leq \delta$,
$\norm{\widetilde{A} - A}_\text{op} \leq \gamma \delta$,
$\norm{\widetilde{B} - B}_F \leq \beta \delta$,
$\norm{B}_F \leq R$,
$\norm{\widetilde{A} v} \geq \alpha_1 \norm{v}$, $\norm{A v} \geq \alpha_2 \norm{v}$,
then
\begin{equation}
\norm{\widetilde{J} - J}_F \leq \frac{\beta}{\alpha_1} \delta + \frac{\gamma R}{\alpha_1 \alpha_2} \delta
\end{equation}
\end{thm}

We point the reader to the \hyperref[sec:appendix]{Appendix} for the proof.

\subsubsection{Second-Order Error Bound}

Assuming an inexact local solution, we state the following error bound on the Jacobian.
\begin{thm}[Second-Order Error Bound]\label{thm:error2}
Given the result of IFT applied to an exact solution, for $\zs \in \mathbb{R}^m$, $H_\zs p = H = -A^{-1} B$, where $A(\zs, p) = D_\zs k(\zs, p) \otimes I$, $B(\zs, p) = H_{p} k + \big( D_{p \zs} k \big) \big( D_p \zs \big)
+ \big( I \otimes \big(D_p \zs\big)^T \big) \big( D_{\zs p} k \big) 
+ \big( I \otimes \big(D_p \zs\big)^T \big) \big( H_{\zs} k \big) \big( D_p \zs \big)$ and an inexact solution $\widetilde{A}(z, p) \neq 0$ and $\widetilde{B}(z, p)$. Assume $\norm{z - \zs} \leq \delta$, 
$\norm{\widetilde{A} - A}_\text{op} \leq \gamma \delta$,
$\norm{H_p k(z, p) - H_p k(\zs, p)}_F \leq \zeta \delta$,
$\norm{D_{z p} k(z, p) - D_{\zs p} k(\zs, p)}_F \leq \eta \delta$,
$\norm{H_z k(z, p) - H_\zs k(\zs, p)}_F \leq \nu \delta$,
$\norm{D_z p - D_\zs p}_F \leq \kappa_J \delta$ defined in Theorem \ref{thm:error1},
$\norm{\widetilde{A} v} \geq \alpha_1 \norm{v}$, $\norm{A v} \geq \alpha_2 \norm{v}$,
then
\begin{equation}
\norm{\widetilde{H} - H}_F \leq \frac{\zeta + 2 \eta \kappa_g + \nu \kappa_g^2}{\alpha_1} \delta + \frac{\gamma R_H}{\alpha_1 \alpha_2} \delta.
\end{equation}
\end{thm}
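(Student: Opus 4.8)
The plan is to reuse, almost verbatim, the perturbation skeleton of the First-Order Error Bound (Theorem~\ref{thm:error1}), since both the exact Hessian $H = -A^{-1}B$ and the inexact one $\widetilde{H} = -\widetilde{A}^{-1}\widetilde{B}$ have the same ``inverse times matrix'' form; the only genuinely new ingredient is a bound on the perturbation $\norm{\widetilde{B} - B}_F$ of the four-block right-hand side defined in Theorem~\ref{thm:hessian}. First I would establish the abstract linear-system inequality that drives both error bounds. From $A H = -B$ and $\widetilde{A}\widetilde{H} = -\widetilde{B}$ one gets the telescoping identity
\[
\widetilde{A}(\widetilde{H} - H) = -(\widetilde{B} - B) - (\widetilde{A} - A)H,
\]
and, applying the hypothesis $\norm{\widetilde{A} v} \geq \alpha_1 \norm{v}$ column-wise to $\widetilde{H} - H$, one obtains $\norm{\widetilde{A}(\widetilde{H}-H)}_F \geq \alpha_1 \norm{\widetilde{H} - H}_F$, hence
\[
\norm{\widetilde{H} - H}_F \leq \frac{1}{\alpha_1}\Big(\norm{\widetilde{B} - B}_F + \norm{\widetilde{A} - A}_\text{op}\,\norm{H}_F\Big).
\]
This is exactly the mechanism of Theorem~\ref{thm:error1}, so I would transcribe that argument rather than rederive it.

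Next I would dispatch the second summand, which yields the term $\frac{\gamma R_H}{\alpha_1 \alpha_2}\delta$ verbatim as in the first-order case: the hypothesis gives $\norm{\widetilde{A} - A}_\text{op} \leq \gamma\delta$, while $\norm{A v} \geq \alpha_2 \norm{v}$ gives $\norm{A^{-1} w} \leq \alpha_2^{-1}\norm{w}$ column-wise, so $\norm{H}_F = \norm{A^{-1}B}_F \leq \alpha_2^{-1}\norm{B}_F \leq R_H/\alpha_2$, where $R_H$ uniformly bounds $\norm{B}_F$. The Kronecker structure $A = D_\zs k \otimes I$ is harmless throughout, since $\norm{M \otimes I}_\text{op} = \norm{M}_\text{op}$ and the coercivity lower bounds transfer to the Kronecker lift; I would record this as a preliminary remark so the $\otimes I$ factors can be dropped from the norm accounting.

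The crux is the first summand $\frac{1}{\alpha_1}\norm{\widetilde{B} - B}_F$. Here I would expand $B$ into its four additive blocks and bound each block's perturbation with the add-and-subtract identity for products, using submultiplicativity of the Frobenius and operator norms and the fact that $I \otimes (D_p\zs)^T$ contributes only a factor $\norm{D_p\zs}$. The block $H_p k$ contributes $\zeta\delta$ directly via its Lipschitz hypothesis; the two structurally symmetric mixed blocks $(D_{p\zs}k)(D_p\zs)$ and $(I \otimes (D_p\zs)^T)(D_{\zs p}k)$ each contribute $\eta\kappa_g\delta$ (the $\eta$-Lipschitz cross-derivative of $k$ flanked by one sensitivity of magnitude $\leq \kappa_g$), giving $2\eta\kappa_g\delta$; and the block $(I \otimes (D_p\zs)^T)(H_\zs k)(D_p\zs)$ contributes $\nu\kappa_g^2\delta$ (the $\nu$-Lipschitz $H_\zs k$ flanked by two sensitivities). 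Summing gives $\norm{\widetilde{B} - B}_F \leq (\zeta + 2\eta\kappa_g + \nu\kappa_g^2)\delta$, and dividing by $\alpha_1$ closes the bound.

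The main obstacle I anticipate is the bookkeeping inside $\norm{\widetilde{B} - B}_F$: each block is itself a product of several perturbed factors, so the telescoping generates cross terms, and one must (i) track which norm (operator versus Frobenius) multiplies which factor to remain submultiplicative, (ii) absorb the Kronecker lifts without inflating constants, and (iii) reconcile the \emph{magnitude} bound $\kappa_g$ on the first-order sensitivity $D_p\zs$ with its \emph{perturbation} bound $\kappa_J$ inherited from Theorem~\ref{thm:error1}, showing that the residual $D_p\zs$-perturbation cross terms are either absorbed into the stated constants or are higher order in $\delta$. Controlling $R_H$ similarly requires uniform bounds on the partial-derivative blocks, which I would state as standing regularity assumptions alongside the coercivity constants $\alpha_1, \alpha_2$.
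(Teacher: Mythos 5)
Your proposal follows essentially the same route as the paper: the add-and-subtract telescoping $\widetilde{A}^{-1}\widetilde{B}-A^{-1}B=\widetilde{A}^{-1}(\widetilde{B}-B)+(\widetilde{A}^{-1}-A^{-1})B$ (your residual-equation form $\widetilde{A}(\widetilde{H}-H)=-(\widetilde{B}-B)-(\widetilde{A}-A)H$ is the same decomposition after multiplying through), coercivity of $\widetilde{A}$ and $A$ to control the inverses, the observation that $\otimes I$ preserves operator norms, and a blockwise Lipschitz bound on the four terms of $\widetilde{B}-B$ yielding $(\zeta+2\eta\kappa+\nu\kappa^2)\delta$. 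In fact your treatment of $\norm{\widetilde{B}-B}_F$ is more explicit than the paper's (which asserts that bound in one line), and the difficulty you flag---that the stated hypotheses give a \emph{perturbation} bound $\kappa_J$ on $D_p\zs$ where the product-rule cross terms actually require \emph{magnitude} bounds on $D_p\zs$ and on the cross-derivative blocks---is a genuine gap in the paper's own argument, not in yours.
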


We point the reader to the \hyperref[sec:appendix]{Appendix} for the proof.

\subsubsection{Error Bound Optimization}\label{sec:error_bound_opt}
Now, assuming an inexact solution we analyze the diagonal regularization of the inverse in the application of IFT. We let $\widehat{A} = \widetilde{A} + \epsilon I$.

\begin{thm}[Regularized First-Order Error Bound]\label{thm:error1_reg}
Given the result of regularized IFT applied to an exact solution $D_\zs p = g = -A^{-1} B$, where $A(\zs, p) = D_\zs k(\zs, p)$, $B(\zs, p) = D_p k(\zs, p)$ and an inexact solution $\widetilde{A}(z, p) = D_z k(z, p) + \epsilon I$, $\widetilde{B}(z, p) = D_p k(z, p)$ (where $k(z, p) \neq 0$) . Assume
$\norm{z - \zs} \leq \delta$,
$\norm{\widetilde{A} - A}_\text{op} \leq \gamma \delta$,
$\norm{\widetilde{B} - B}_F \leq \beta \delta$,
$\norm{B}_F \leq R$,
$\norm{\widetilde{A} v} \geq \alpha_1 \norm{v}$ and $v^T \widetilde{A} v \geq 0$, so
$\norm{\widehat{A} v} \geq (\alpha_1 + \epsilon) \norm{v}$,
$\norm{A v} \geq \alpha_2 \norm{v}$
then
\begin{equation}
\norm{\widehat{J} - J}_F \leq \frac{\beta \delta}{\alpha_1 + \epsilon} + \frac{R\big(\gamma \delta + \epsilon\big)}{\big(\alpha_1 + \epsilon\big) \alpha_2}
\end{equation}
\end{thm}

Selecting a \emph{post hoc} arbitrary regularization allows to tighten the bound. We refer the reader to the \hyperref[sec:appendix]{Appendix} for the proof.

\section{EXPERIMENTS}\label{sec:experiments}

\subsection{Regression Model Auto-Tuning}

We compare the performance of three commonly used nonlinear optimization algorithms on the problem of linear model improvement via smooth hyperparameter tuning. Using the above analysis we are able to apply a second-order optimization method, which offers to dramatically reduce the number of lower function evaluations in BLPP and significantly speed up optimization problems where the lower level constraint evaluation dominates. To compare, we optimize the hyperparameters of 3 linear models on MNIST \citep{lecun1998mnist} using three commonly used optimization algorithms, two gradient-only: (i) Adam \citep{kingma2014adam}, (ii) L-BFGS \citep{liu1989limited} and one using second-order information (enabled by this work) (iii) SQP \citep{nocedal2006numerical}. The three linear models we choose all employ a least-squares lower level loss where the target vector is the one-hot encoding of the ten MNIST digits. We train on 1000 randomly selected MNIST examples in the train set and evaluate the upper level (test) loss on all examples in the test set. The upper level loss is the cross-entropy classification loss, $f_U(\zs, p) = -\sum_j^{N_\text{test}}\sum_{i=1}^{10} t_{i,j} \log\big(q_{i,j}\big)$ where $t_{i,j} = \delta_{i,y_j}$ and $q_{i,j} = e^{x_j^T \zs_i} / \left( \sum_{i=1}^{10} e^{x_j^T \zs_i} \right)$.\footnote{$t_{i,j} = \delta_{i,y_j}$ indicates that $t_{i,j}$ is equal to 1 if example $j$ is of the class $i$ and 0 otherwise.} For all models $z \in \mathbb{R}^{n d}$ where $n = 10$ is the number of MNIST classes and $d$ is the number of features in the data vector.

\paragraph{Model 1 (\texttt{RR})} A single hyperparameter least-squares model with Tikhonov regularization \citep{tikhonov1943stability}, also known as ridge regression \citep{gruber2017improving}. The lower level loss takes the form $f_L(z, p) = \norm{X z - Y}_\text{F}^2 + 10^p \norm{z}_\text{F}^2$. The features are raw image pixels and a bias term.
\paragraph{Model 2 (\texttt{diag})} A least-squares model with Tikhonov regularization where each weight is penalized with a separate weight. The lower level loss takes the form $f_L(z, p) = \norm{X z - Y}_\text{F}^2 + \sum_i^{\dim(z)} 10^{p_i} z_i^2$. The features are the same as in Model 1.
\paragraph{Model 3 (\texttt{conv})} A least-squares model where the images are first passed through a parametric 2D convolution filter with a $3 \times 3$ kernel, a stride of 2, a bias, 1 input channel and 2 output channels. The convolution output is passed through the $\tanh$ activation function, before adding bias and applying the Tikhonov regularized least-squares model. The reduced images have a dimension of 338. The convolution weight, bias and the scalar Tikhonov regularization weight form the vector $p$.

We show the optimization results in Figures \ref{fig:linear_models_loss} \& \ref{fig:linear_models_acc}. The use of second-order derivative information significantly reduces the number of least-squares evaluations.

In \texttt{RR} our method, SQP, outperforms other optimizers both in terms of test accuracy and test loss. In \texttt{diag} L-BFGS goes unstable and the Adam optimizer tends to outperform SQP in terms of the test loss, but it takes a 100 evaluations and the SQP converges much quicker to a high test accuracy. Finally, the L-BFGS also exhibits poor performance on the \texttt{conv} model, SQP converges to a lower classification loss much faster. Adam reaches a higher test accuracy, but also a higher classification loss.

\subsection{Hyperparameter Optimization}

\begin{figure}[!t]
\centering
\includegraphics[width=\linewidth]{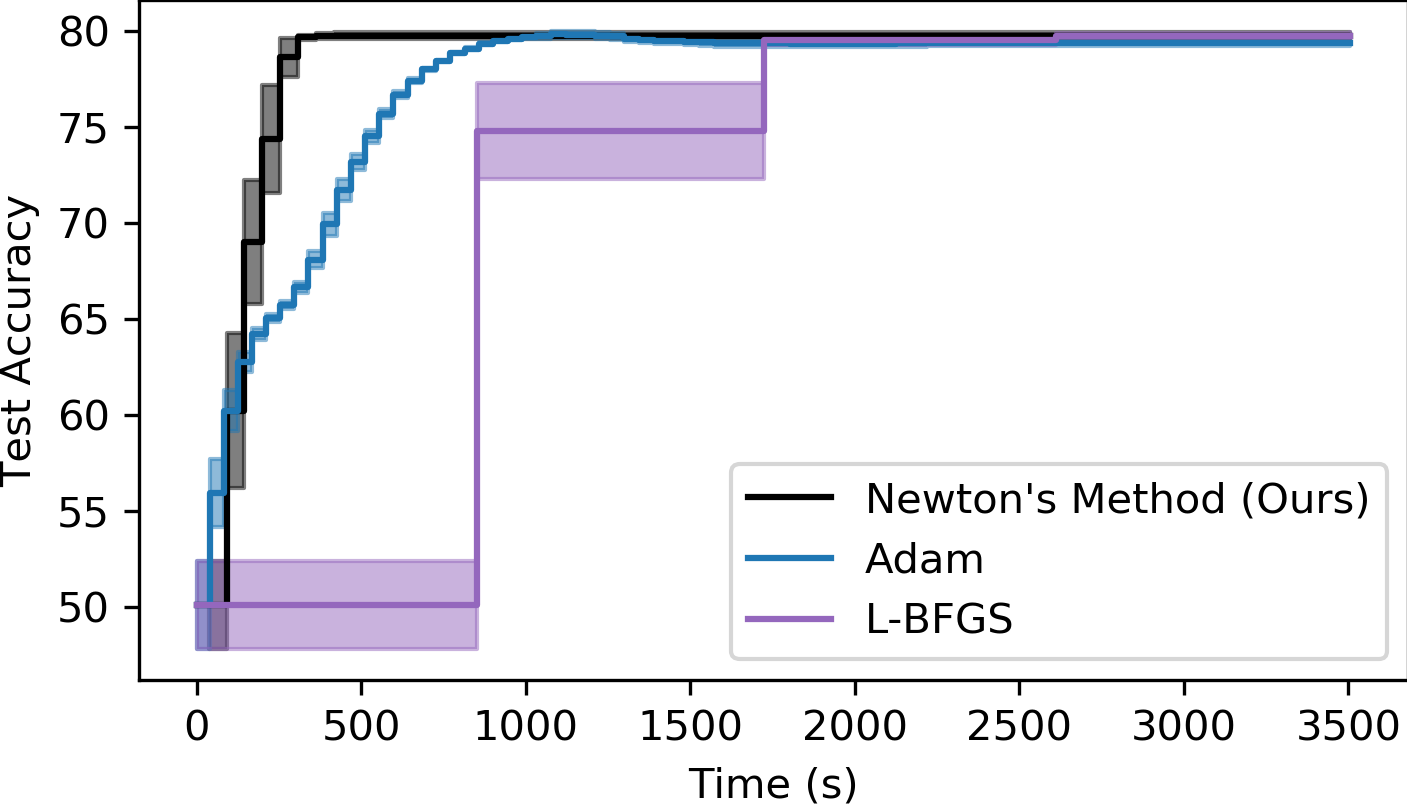}
\caption{Optimization of multi-class SVM on Fashion-MNIST. Newton's method using second-order derivatives derived in this paper leads to much faster convergence.}
\label{fig:svm_opt}
\end{figure}

We further verify the usefulness of computing second-order derivatives in practical problem instances by applying the Newton's Method to tuning a multi-class SVM model \citep{crammer2001algorithmic} on the Fashion-MNIST \citep{xiao2017fashion} dataset for best performance on the test set. We reformulate the constraints as log-barrier penalty\footnote{The log-barrier is defined for a constraint $x \leq 0$ as $-\log\left(-\alpha x\right) / \alpha$ for a tunable refinement constant $\alpha$.} to ensure smoothness, with the refinement value of $\alpha = 10^2$. We solve the resulting problem with the Mosek optimizer.\footnote{\url{www.mosek.com}}

We scale the images down to 14 $\times$ 14 to aid with the memory requirements and fit a multi-class SVM model to 2,000 randomly selected samples in the train set. We define upper level loss as the cross-entropy classification loss between the predictions and the labels. We select 5 random seeds to verify algorithmic performance.

We show a test accuracy plot in Figure~\ref{fig:svm_opt}. We note that the optimization time, i.e., training the lower level multi-class SVM, dominates both the gradient and Hessian computation. The application of Newton's Method (enabled by second-order derivatives shown in this work) significantly reduces the number of necessary lower level optimizations and reaches maximum test accuracy much quicker in terms of wall-clock time.

\subsection{Parameter Loss Landscape in Inverse Optimal Control}\label{sec:ioc}

We investigate the Inverse Optimal Control (IOC) problem as a case study in loss landscape or curvature analysis. IOC has a natural formulation as a BLPP. 

IOC has a large body of literature of its own, e.g. \citet{keshavarz2011imputing, johnson2013inverse, terekhov2011analytical}, but we focus here on some simple examples that are illustrative to the general BLPP curvature analysis. We show that the unconstrained Optimal Control Problem (OCP) problem we formulate is globally convex in parameter, but that in the presence of constraints in the OCP, the resulting BLPP requires more care---linear inequalities, without reformulation, violate our smoothness assumption in Theorem~\ref{thm:hessian}.

Given known linear time-invariant discrete time dynamics $x^{(i+1)} = A x^{(i)} + B u^{(i)}$ and state and control cost matrices $Q, R$, and control limits $u^{(i)} \in \mathbb{U} = \{u \mid \norm{u}_\infty \leq u_\text{lim}\}$, we assume we observe some expert's trajectory $X_\text{e} = [ x^{(0)}, \dots, x^{(N)} ]$ and control history $U_\text{e} = [ u^{(0)}, \dots, u^{(N)} ]$. We seek to learn a reference trajectory we assume the expert is tracking, i.e., the expert behaves optimally under the cost $J(X, U)$:
\begin{equation}
X_\text{e}, U_\text{e} = \argmin_{x^{(i+1)} = A x^{(i)} + B u^{(i)}, U \in \mathbb{U}} J(X, U, X_\text{e}, U_\text{e})
\end{equation}
where $J(X, U, X_\text{e}, U_\text{e}) = \sum_i (x^{(i)} - x^{(i)}_\text{e,ref})^T Q (x^{(i)} - x^{(i)}_\text{e,ref}) + (u^{(i)} - u^{(i)}_\text{e,ref})^T R (u^{(i)}- u^{(i)}_\text{e,ref})$.
This leads to a BLPP
\begin{equation}
\begin{aligned}
\min_{X_\text{ref}, U_\text{ref}} ~~~ & \norm{X_\text{e} - X^\star}_2^2 + \norm{U_\text{e} - U^\star} \\
\text{s.t.} ~~~ X^\star,&\ U^\star = \argmin_{\substack{x^{(i+1)} = A x^{(i)} + B u^{(i)}, \\ \!\!\!\!\!\!U \in \mathbb{U}}} J(X, U, X_\text{ref}, U_\text{ref}). 
\end{aligned}
\end{equation}
In the typical notation of this paper, $\zs = \big(X^\star, U^\star\big)$, $p = \big(X_\text{ref}, U_\text{ref}\big)$. The problem corresponds to discovering the reference trajectory of an optimal agent, e.g., the centerline of a road using an observed trajectory of an autonomous car.

We show the comparison between the upper loss landscapes for IOC with unconstrained/constrained controls in Figure~\ref{fig:ioc_comp}. We employ the Principle Component Analysis (PCA) dimension reduction technique on the optimization path to visualize a many dimensional upper loss in 2 dimensions, inspired by \citet{li2017visualizing}.

In the absence any constraints, the resulting problem as stated is convex which follows from the application of Equations \eqref{eq:chain_2nd} and \eqref{eq:2nd_hess}.

\begin{figure}[!t]
\centering
\subfigure[Unconstrained]{
\includegraphics[width=0.46\linewidth]{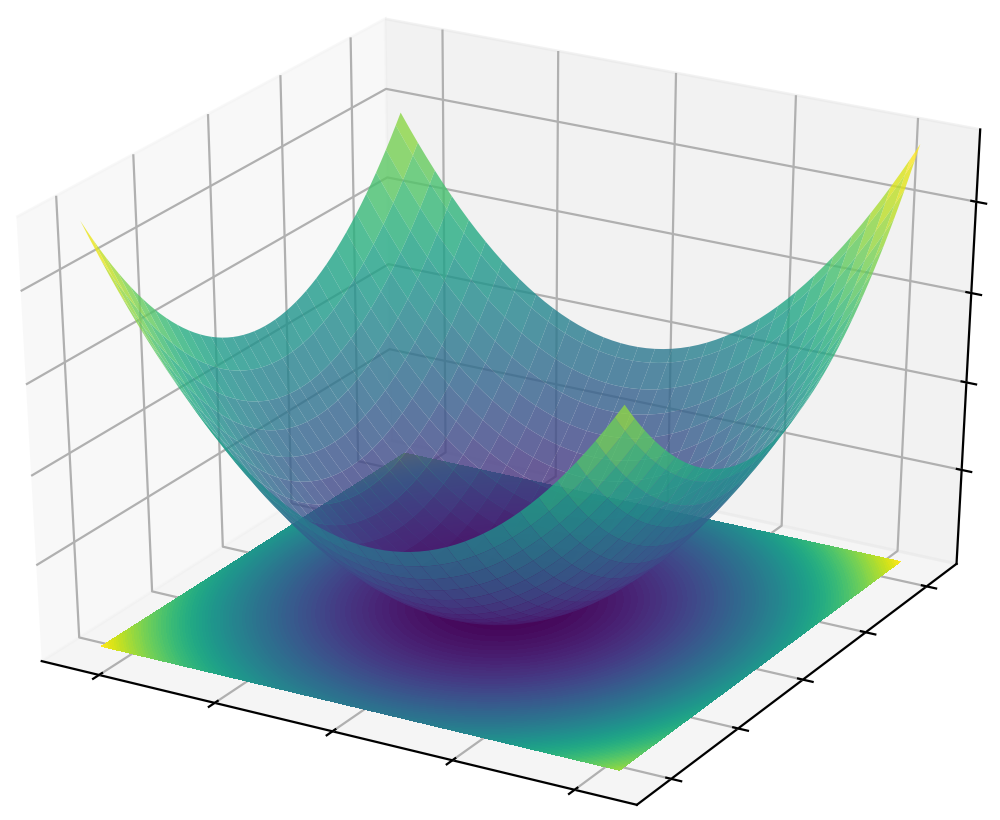}
}
\subfigure[Control Constrained]{
\includegraphics[width=0.46\linewidth]{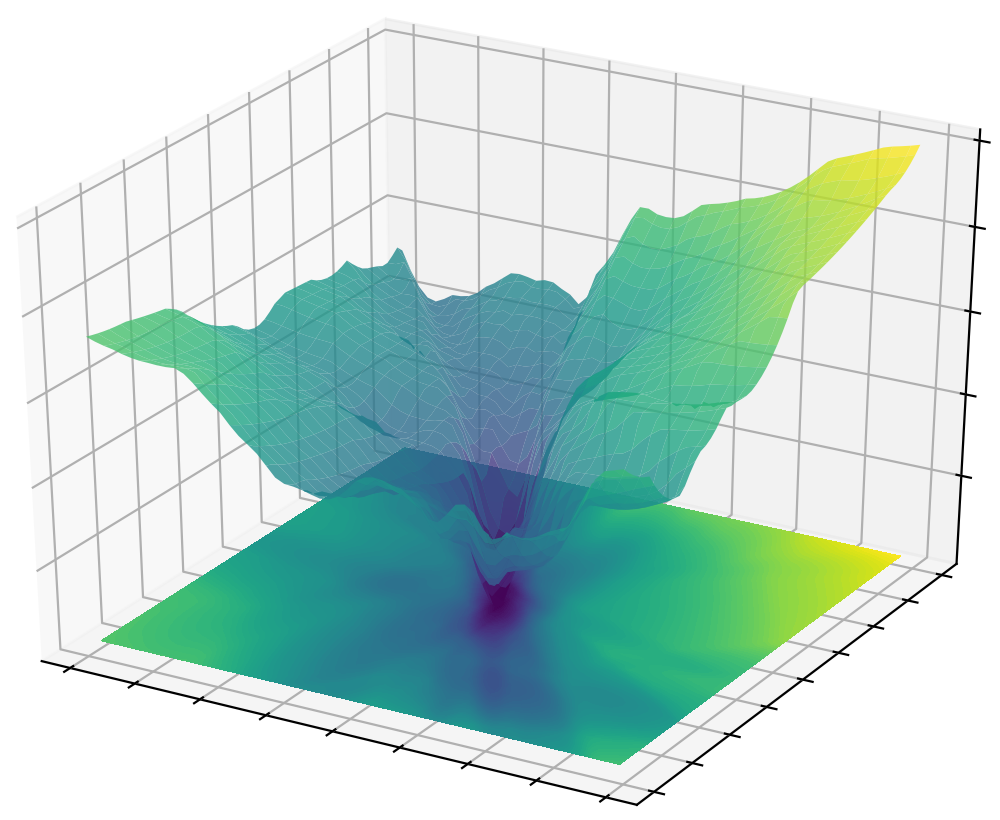}
}
\caption{Comparison of upper loss landscape in inverse optimal control without and with control constraints. Surface and the contour plots show the PCA 2D projections based on the optimization path.}
\label{fig:ioc_comp}
\end{figure}

\begin{figure}[!b]
\centering
\subfigure[Log-barrier, $\alpha = 10^1$]{
\includegraphics[width=0.46\linewidth]{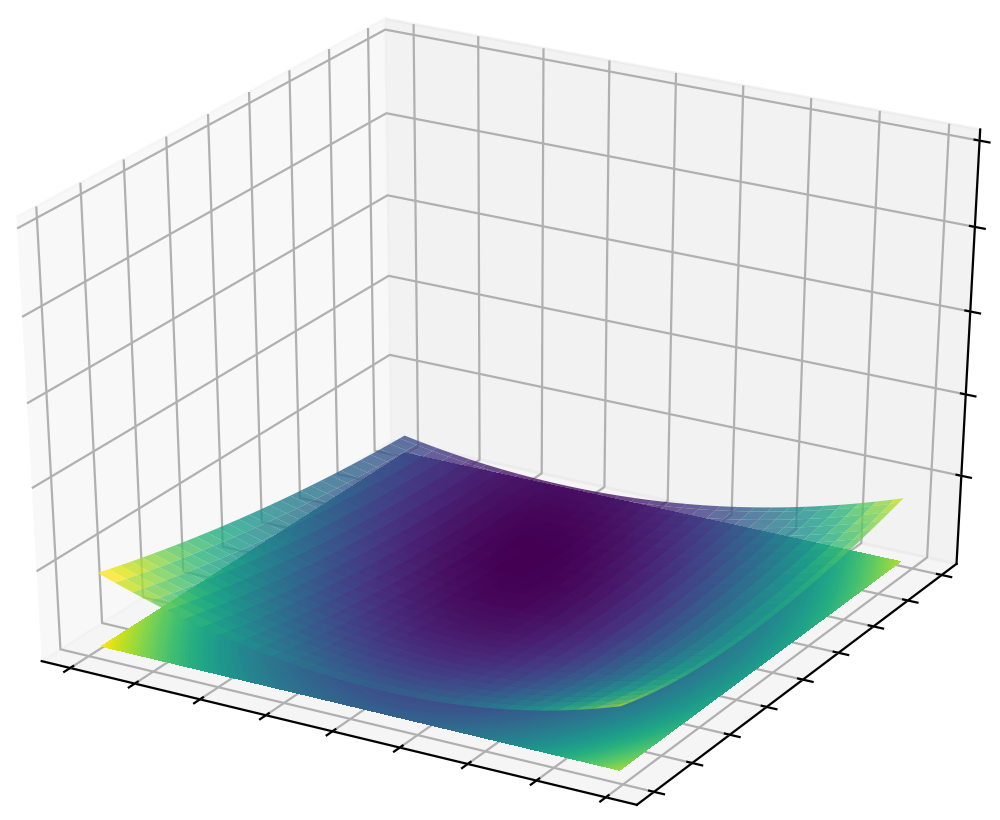}
}
\subfigure[Log-barrier, $\alpha = 10^{2.5}$]{
\includegraphics[width=0.46\linewidth]{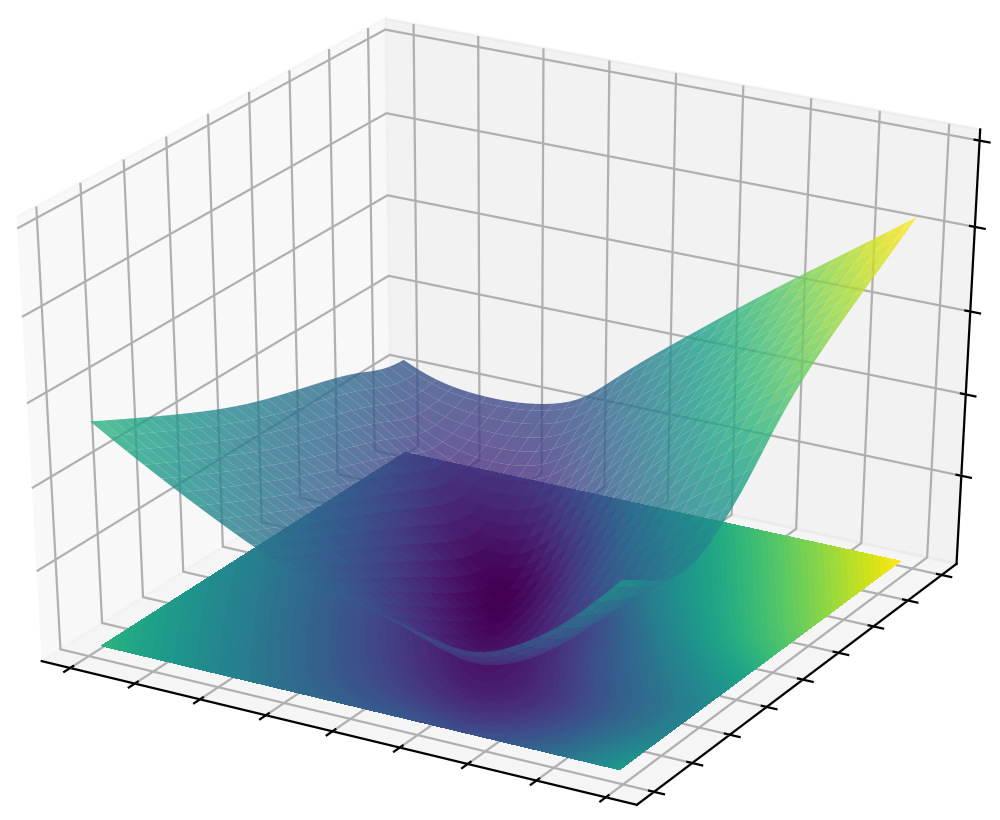}
}
\subfigure[Log-barrier, $\alpha = 10^4$]{
\includegraphics[width=0.46\linewidth]{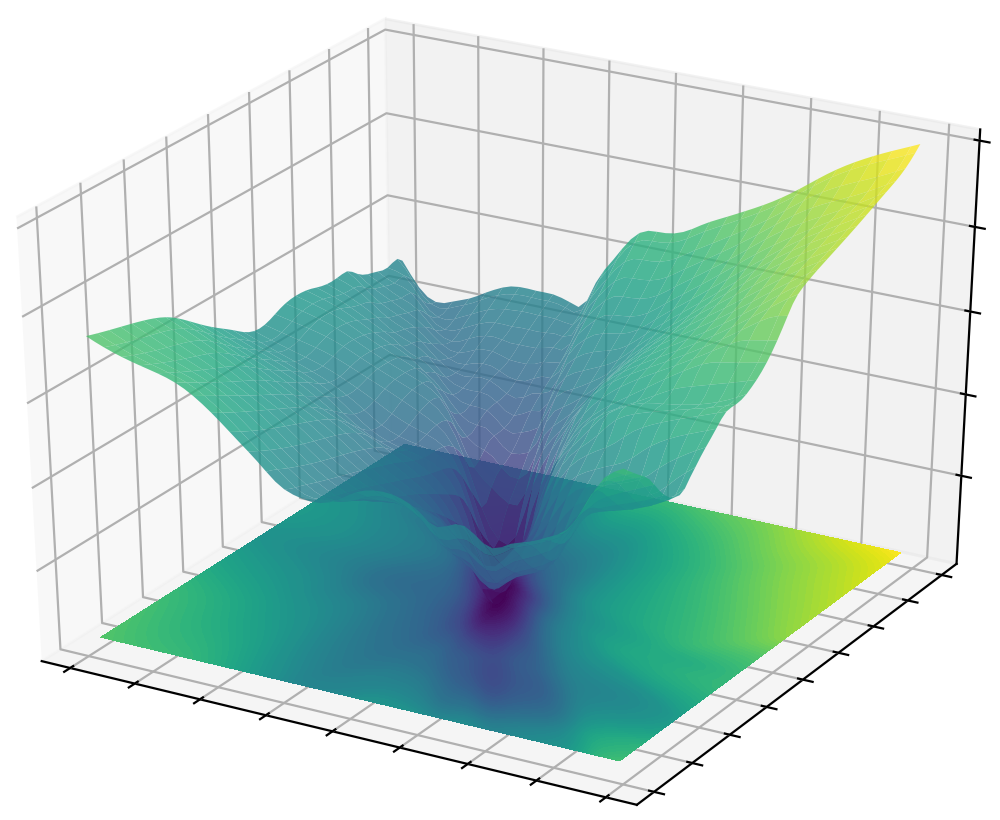}
}
\subfigure[Exactly constrained]{
\includegraphics[width=0.46\linewidth]{figs/mpc/mpc.png}
}
\caption{Comparison of loss landscape for inverse box-constrained-control MPC. The surface and contour plots show the PCA 2D projections based on the optimization path.}
\label{fig:mpc_landscape}
\end{figure}

Naive application of Equation~\eqref{eq:2nd_hess} to a constrained IOC problem yields a globally positive definite Hessian, but violates the assumptions of Theorem~\ref{thm:hessian}, since FOOC with linear constraints are not twice continuously differentiable. In the presence of even the simple maximum control value constraints considered, the loss-in-parameter (the upper loss) is highly non-convex. We observe that any constraints can be smoothly approximated to any degree of precision using the log-barrier function which is infinitely differentiable. Figure~\ref{fig:mpc_landscape} shows the 2D projected loss-in-parameter $p$ landscape for a successive refinement of the log-barrier constraints; for high values of refinement $\alpha \rightarrow \infty$, the loss landscape closely approximates the exactly constrained version, yet remains differentiable, so Theorem~\ref{thm:hessian} can be applied. 

\paragraph{Code release}
The code for our experiments is contained at \url{https://github.com/StanfordASL/Second-OrderSensitivityAnalysisForBilevelOptimization}.

\section{DISCUSSION}\label{sec:discussion}

\paragraph{Limitations \& Promises}
The method we propose here offers to make better use of every single lower level problem evaluation, but comes with limitations. Firstly, like any second-order optimization method, it might not be best suited for highly non-convex landscapes, which can be common in BLPPs as Figure \ref{fig:ioc_comp} shows. Secondly, in computational complexity analysis we do not focus on obtaining partial explicit derivatives necessary to compute the second-order sensitivity expression. We do so because these can be problem specific and often be zero or have an analytic form, but if they are obtained via automatic differentiation (the method we employ), their computation time highly depends on chosen software package. In general, it might turn out that computing second-order sensitivity information might take longer than several evaluations of the lower level and the first-order sensitivity information evaluation---at which point gradient-only optimization methods will likely function better than our proposed approach. Nevertheless, our work expands the optimization toolbox where some examples of BLPPs can be optimized much quicker, as Figure \ref{fig:svm_opt} shows.

\section{CONCLUSIONS}\label{sec:conclusions}

In this work we derive the second-order derivatives of the upper level objective in a general bilevel program via sensitivity analysis using the implicit function theorem. Second-order information enables second-order optimization to be applied to these problems, which we argue can drastically reduce the number of lower level function evaluations, and speed up optimization, in cases where the lower level evaluation dominates. We further show that the computational complexity of our proposed approach is comparable to first-order IFT and we adapt  error bound analysis for first-order IFT derivatives to our second-order IFT derivatives. 

\paragraph{Future Work} Future work includes quantifying how well various approximations suggested for first-order IFT apply to second-order IFT and whether (and when), for optimization purposes, some terms in our second-order sensitivity expression can be omitted or approximated to make their computation quicker. Finally, we are interested in further investigating loss landscapes, and the associated difficulty of optimization, for constrained bilevel problems. 


\subsection*{Acknowledgments}
Toyota Research Institute and the NASA University Leadership Initiative (grant \#80NSSC20M0163) provided funds to support this work; this article solely reflects the opinions and conclusions of its authors and not any Toyota or NASA entity.

\nocite{*}
\bibliography{bibliography.bib}  

\onecolumn

\hsize\textwidth
\linewidth\hsize \toptitlebar {\centering
  {\Large\bfseries Second-Order Sensitivity Analysis for Bilevel Optimization\\
Appendix \par}}
\bottomtitlebar%


\setcounter{section}{0}
\renewcommand{\thesection}{\Alph{section}}
\renewcommand*{\theHsection}{chX.\the\value{section}}
\section{PROOFS}
\label{sec:appendix}

\setcounter{thm}{1}
\begin{thm}[Second-Order IFT]\label{thm:hessian}
Let $k: \mathbb{R}^{m} \times \mathbb{R}^{n} \rightarrow \mathbb{R}^m$ be a twice continuously differentiable multivariate function of two variables $\zs \in \mathbb{R}^m$ and $p \in \mathbb{R}^n$ such that $k$ defines a fixed point for $x$, i.e. $k(\zs, p) = 0$ for all values of $p \in \mathbb{R}^n$. Then, the Hessian of (implicitly defined) $\zs$ w.r.t. $p$ is given by
\begin{equation}\label{eq:2nd_hess}
\begin{aligned}
H_p \zs = -\bigg[ \big(D_\zs k \big)^{-1} \otimes I \bigg] \bigg[ 
H_{p} k + \big( D_{p \zs} k \big) \big( D_p \zs \big) \\
~~~~~~~~~~~~+ \big( I \otimes \big(D_p \zs\big)^T \big) \big( D_{\zs p} k \big) \\
~~~~~~~~~~~~+ \big( I \otimes \big(D_p \zs\big)^T \big) \big( H_{\zs} k \big) \big( D_p \zs \big) \bigg].
\end{aligned}\tag{7}
\end{equation}
\end{thm}

\begin{proof}
Differentiation of the implicit expression further requires establishing a convention for matrix expression with vector inputs derivatives. We keep the standard convention of representing the partial Jacobian (with operator $D$) and scalar function Hessian (with operator $H$). The missing representations include the Hessian and the 2nd order mixed derivative. Let $f : \R{m} \times \R{n} \mapsto \R{p}$, then we represent Hessian and 2nd order mixed derivatives as 
\begin{equation}
\begin{aligned}
D_{a b} f(a, b) &= \begin{bmatrix}
D_b \big(\nabla_a \big(f_1 \big) \big) \\
\vdots \\
D_b \big(\nabla_a \big(f_{p}\big) \big) \\
\end{bmatrix} &\in \R{pm \times n}
\\
H_a f(a, b) &= \begin{bmatrix}
H_a f_1 \\
\vdots \\
H_a f_{p} \\
\end{bmatrix} &\in \R{pm \times m}.
\end{aligned}
\end{equation}

We introduce the following derivatives rules
\begin{align}
D_a \big( A(a) B \big) &= \big(I \otimes B^T\big) \big( D_a A(a) \big)\\
D_a \big( B A(a) \big) &= \big(B \otimes I\big) \big(D_a A(a) \big)
\end{align}
where the chain rule applies as
\begin{equation}
D_a \big(A(c(a)\big) = \big(D_c A(c)\big) \big(D_a c(a)\big)
\end{equation}

This allows to differentiate $D_p k + \big( D_\zs k \big) \big( D_p \zs \big) = 0$ giving
\begin{equation}
\begin{aligned}
\label{eq:2nd}
& H_{p} k(\zs, p) \\
&+ \big( D_{p \zs} k(\zs, p) \big) \big( D_p \zs \big) \\
&+ \big( I \otimes \big(D_p \zs\big)^T \big) \big( D_{\zs p} k(\zs, p) \big) \\
&+ \big( I \otimes \big(D_p \zs\big)^T \big) \big( H_{\zs} k(\zs, p) \big) \big( D_p \zs \big) \\
&+ \big( \big(D_\zs k(\zs, p) \big) \otimes I \big) \big( H_{p} \zs \big) \\
&= 0
\end{aligned}
\end{equation}

The implicit Hessian is given by solving Equation~\eqref{eq:2nd} for $H_{p} \zs$.
\begin{equation}
\begin{aligned}
H_p \zs = -\bigg[ \big(D_\zs k(\zs, p) \big)^{-1} \otimes I \bigg] \bigg[ 
& H_{p} k(\zs, p) + \big( D_{p \zs} k(\zs, p) \big) \big( D_p \zs \big) & \\
& + \big( I \otimes \big(D_p \zs\big)^T \big) \big( D_{\zs p} k(\zs, p) \big) & \\
& + \big( I \otimes \big(D_p \zs\big)^T \big) \big( H_{\zs} k(\zs, p) \big) \big( D_p \zs \big) \bigg] & 
\end{aligned}\tag{7}
\end{equation}
exploiting the identity $\big( A \otimes I \big)^{-1} = A^{-1} \otimes I$.
\end{proof}


\begin{thm}[First-Order Error Bound]\label{thm:error1}
Given the result of IFT applied to an exact solution $D_\zs p = g = -A^{-1} B$, where $A(\zs, p) = D_\zs k(\zs, p)$, $B(\zs, p) = D_p k(\zs, p)$ and an inexact solution $\widetilde{A}(z, p) = D_z k(z, p)$ (where $k(z, p) \neq 0$) and $\widetilde{B}(z, p) = D_p k(z, p)$. Assume $\norm{z - \zs} \leq \delta$,
$\norm{\widetilde{A} - A}_\text{op} \leq \gamma \delta$,
$\norm{\widetilde{B} - B}_F \leq \beta \delta$,
$\norm{B}_F \leq R$,
$\norm{\widetilde{A} v} \geq \alpha_1 \norm{v}$, $\norm{A v} \geq \alpha_2 \norm{v}$,
then
\begin{equation}
\norm{\widetilde{J} - J}_F \leq \frac{\beta}{\alpha_1} \delta + \frac{\gamma R}{\alpha_1 \alpha_2} \delta
\end{equation}
\end{thm}

\begin{proof}
\begin{align}
-(\widetilde{J} - J) &= \widetilde{A}^{-1} \widetilde{B} - A^{-1} B \\
&= \widetilde{A}^{-1} \widetilde{B} - \widetilde{A}^{-1} B + \widetilde{A}^{-1} B - A^{-1} B \\
&= \widetilde{A}^{-1} (\widetilde{B} - B) + (\widetilde{A}^{-1} - A^{-1}) B \label{eq:alpha_bound}
\end{align}
which allows to bound the implicit gradient error as
\begin{align*}
\norm{\widetilde{J} - J}_F &\leq \norm{\widetilde{A}^{-1}}_\text{op} \norm{\widetilde{B} - B}_F + \norm{\widetilde{A}^{-1} - A^{-1}}_\text{op} \norm{B}_F \\
&\leq \norm{\widetilde{A}^{-1}}_\text{op} \norm{\widetilde{B} - B}_F + \norm{\widetilde{A}^{-1}}_\text{op} \norm{\widetilde{A} - A}_\text{op} \norm{A^{-1}}_\text{op} \norm{B}_F \\
& \leq \frac{\beta}{\alpha_1} \delta + \frac{\gamma R}{\alpha_1 \alpha_2} \delta  
\end{align*}
exploiting the fact that for any invertible matrices $M_1$, $M_2$ (where $\widetilde{A}$, $A$ are invertible from $\norm{\widetilde{A} v} \geq \alpha_1 \norm{v}$, $\norm{A v} \geq \alpha_2 \norm{v}$) $\big( M_1^{-1} - M_2^{-2}\big) = M_1^{-1} \big( M_1 - M_2 \big) M_2^{-1}$.
\end{proof}

\begin{thm}[Second-Order Error Bound]\label{thm:error2}
Given the result of IFT applied to an exact solution, for $\zs \in \mathbb{R}^m$, $H_\zs p = H = -A^{-1} B$, where $A(\zs, p) = D_\zs k(\zs, p) \otimes I$, $B(\zs, p) = H_{p} k + \big( D_{p \zs} k \big) \big( D_p \zs \big)
+ \big( I \otimes \big(D_p \zs\big)^T \big) \big( D_{\zs p} k \big) 
+ \big( I \otimes \big(D_p \zs\big)^T \big) \big( H_{\zs} k \big) \big( D_p \zs \big)$ and an inexact solution $\widetilde{A}(z, p)$, $\widetilde{B}(z, p)$ (where $k(z, p) \neq 0$). Assume $\norm{z - \zs} \leq \delta$, 
$\norm{\widetilde{A} - A}_\text{op} \leq \gamma \delta$,
$\norm{H_p k(z, p) - H_p k(\zs, p)}_F \leq \zeta \delta$,
$\norm{D_{z p} k(z, p) - D_{\zs p} k(\zs, p)}_F \leq \eta \delta$,
$\norm{H_z k(z, p) - H_\zs k(\zs, p)}_F \leq \nu \delta$,
$\norm{D_z p - D_\zs p}_F \leq \kappa_J \delta$ defined in Theorem \ref{thm:error1},
$\norm{B}_F \leq R_H$,
$\norm{\widetilde{A} v} \geq \alpha_1 \norm{v}$, $\norm{A v} \geq \alpha_2 \norm{v}$,
then
\begin{equation}
\norm{\widetilde{H} - H}_F \leq \frac{\zeta + 2 \eta \kappa_J + \nu \kappa_J^2}{\alpha_1} \delta + \frac{\gamma R_H}{\alpha_1 \alpha_2} \delta.
\end{equation}
\end{thm}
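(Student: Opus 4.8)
The plan is to mirror the proof of Theorem~\ref{thm:error1} at the top level and to concentrate all the novelty into a single Lipschitz-type bound on the perturbation of the right-hand side $B$. First I would write the same add-and-subtract decomposition used for the first-order bound,
\begin{equation*}
-(\widetilde{H} - H) = \widetilde{A}^{-1}\widetilde{B} - A^{-1}B = \widetilde{A}^{-1}(\widetilde{B} - B) + (\widetilde{A}^{-1} - A^{-1})B,
\end{equation*}
and apply the triangle inequality together with submultiplicativity of the operator and Frobenius norms. The second summand is then handled \emph{verbatim} as in Theorem~\ref{thm:error1}: using $\widetilde{A}^{-1} - A^{-1} = \widetilde{A}^{-1}(A - \widetilde{A})A^{-1}$, the coercivity bounds $\norm{\widetilde{A}^{-1}}_\text{op} \leq 1/\alpha_1$ and $\norm{A^{-1}}_\text{op} \leq 1/\alpha_2$, the perturbation bound $\norm{\widetilde{A}-A}_\text{op} \leq \gamma\delta$, and $\norm{B}_F \leq R_H$ immediately deliver the $\frac{\gamma R_H}{\alpha_1\alpha_2}\delta$ term. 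A technical point worth flagging is that here $A = D_\zs k \otimes I$; since $\norm{M \otimes I}_\text{op} = \norm{M}_\text{op}$ (and likewise $\norm{I \otimes N}_\text{op} = \norm{N}_\text{op}$), the Kronecker factors do not inflate any norm and the coercivity constants carry over unchanged from the unstructured case.

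The genuinely new work is then entirely contained in bounding the first summand by $\frac{1}{\alpha_1}\norm{\widetilde{B}-B}_F$ and establishing $\norm{\widetilde{B}-B}_F \leq (\zeta + 2\eta\kappa_J + \nu\kappa_J^2)\delta$. Here $\widetilde{B}$ is the four-term expression defining $B$ in Theorem~\ref{thm:hessian}, evaluated at the inexact solution $z$ and with the inexact Jacobian $D_z p$ substituted for $D_\zs p$, while $B$ is the same expression at $\zs$; note that the constant $\kappa_J$ is precisely the first-order error bound constant $\frac{\beta}{\alpha_1}+\frac{\gamma R}{\alpha_1\alpha_2}$ from Theorem~\ref{thm:error1}, so that $\norm{D_z p - D_\zs p}_F \leq \kappa_J\delta$. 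I would bound $\widetilde{B}-B$ summand by summand. The first summand $H_p k$ contributes exactly $\zeta\delta$ by the assumed bound $\norm{H_p k(z,p) - H_p k(\zs,p)}_F \leq \zeta\delta$, with no product structure to unpack.

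The core obstacle is the two mixed-derivative product terms and the quadratic form $\big(I \otimes (D_p\zs)^T\big)\big(H_\zs k\big)\big(D_p\zs\big)$, each a product of several simultaneously perturbed factors. For these I would repeatedly apply the same telescoping identity $\widetilde{X}\widetilde{Y} - XY = \widetilde{X}(\widetilde{Y}-Y) + (\widetilde{X}-X)Y$ used at the top level, pushing the perturbation onto one factor at a time so each difference can be replaced by its Lipschitz bound ($\eta\delta$ for the mixed derivative $D_{zp}k$, $\nu\delta$ for $H_z k$, and $\kappa_J\delta$ for $D_z p$), again tracking the Kronecker-structured factors $I \otimes (D_p\zs)^T$ through every split. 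The two terms that are linear in $D_p\zs$ each split into one ``derivative-error'' piece and one ``Jacobian-error'' piece, and these combine to give the coefficient $2\eta\kappa_J$; the quadratic term, being bilinear in $D_p\zs$, requires a double application of the identity and produces the pure second-order contribution scaling as $\nu\kappa_J^2$. The delicate bookkeeping I expect to be the main difficulty is that each split also leaves behind norm factors of the \emph{nominal} (exact-solution) quantities---$\norm{D_{\zs p}k}$, $\norm{H_\zs k}$, $\norm{D_p\zs}$---which must be controlled (either assumed bounded or absorbed into the definitions of the Lipschitz constants) in order for the coefficients to collapse to exactly $\zeta + 2\eta\kappa_J + \nu\kappa_J^2$; reconciling these magnitude factors with the clean stated constants is the step that demands the most care, and it is resolved by treating $\zeta,\eta,\nu,\kappa_J$ as bounding the relevant differences at the scale of the already-bounded nominal factors.
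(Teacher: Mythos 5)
Your proposal follows essentially the same route as the paper: the identical add-and-subtract decomposition $\widetilde{A}^{-1}(\widetilde{B}-B) + (\widetilde{A}^{-1}-A^{-1})B$, the inverse-difference identity $M_1^{-1}-M_2^{-1} = M_1^{-1}(M_1-M_2)M_2^{-1}$ with the coercivity constants $\alpha_1,\alpha_2$, and $\norm{B}_F \leq R_H$ for the second term. The one place you go beyond the paper is the term-by-term telescoping of $\norm{\widetilde{B}-B}_F$; the paper simply asserts the bound $(\zeta + 2\eta\kappa_J + \nu\kappa_J^2)\delta$ in its final inequality, and your observation that the clean coefficients only emerge if the leftover nominal factors $\norm{D_{\zs p}k}$, $\norm{H_\zs k}$, $\norm{D_p\zs}$ are bounded by $1$ or absorbed into the Lipschitz constants is a legitimate gap in the paper's own argument, not a defect of your plan.
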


\begin{proof}
\begin{align}
-(\widetilde{H} - H) &= \left( \widetilde{A}^{-1} \otimes I \right) \widetilde{B} - \left( A^{-1} \otimes I \right) B \\
&= \left( \widetilde{A}^{-1} \otimes I \right) \widetilde{B} - \left( \widetilde{A}^{-1} \otimes I \right) B + \left( \widetilde{A}^{-1} \otimes I \right) B - \left( A^{-1} \otimes I \right) B\\
&= \left( \widetilde{A}^{-1} \otimes I \right) (\widetilde{B} - B) + \left( (\widetilde{A}^{-1} - A^{-1}) \otimes I \right) B \label{eq:alpha_bound} \\
\end{align}
which allows to bound the implicit Hessian error as
\begin{align*}
\norm{\widetilde{H} - H}_F &\leq \norm{\widetilde{A}^{-1}}_\text{op} \norm{\widetilde{B} - B}_F + \norm{\widetilde{A}^{-1} - A^{-1}}_\text{op} \norm{B} \\
&\leq \norm{\widetilde{A}^{-1}}_\text{op} \norm{\widetilde{B} - B}_F + \norm{\widetilde{A}^{-1}}_\text{op} \norm{\widetilde{A} - A}_\text{op} \norm{A^{-1}}_\text{op} \norm{B}_F \\
& \leq \frac{\zeta + 2 \eta \kappa_J + \nu \kappa_J^2}{\alpha_1} \delta + \frac{\gamma R_H}{\alpha_1 \alpha_2} \delta
\end{align*}
exploiting the fact that for any invertible matrices $M_1$, $M_2$ (where $\widetilde{A}$, $A$ are invertible from $\norm{\widetilde{A} v} \geq \alpha_1 \norm{v}$, $\norm{A v} \geq \alpha_2 \norm{v}$) $\big( M_1^{-1} - M_2^{-2}\big) = M_1^{-1} \big( M_1 - M_2 \big) M_2^{-1}$.
\end{proof}


\begin{thm}[Regularized First-Order Error Bound]\label{thm:error1_reg}
Given the result of regularized IFT applied to an exact solution $D_\zs p = g = -A^{-1} B$, where $A(\zs, p) = D_\zs k(\zs, p)$, $B(\zs, p) = D_p k(\zs, p)$ and an inexact solution $\widetilde{A}(z, p) = D_z k(z, p) + \epsilon I$, $\widetilde{B}(z, p) = D_p k(z, p)$ (where $k(z, p) \neq 0$) . Assume
$\norm{z - \zs} \leq \delta$,
$\norm{\widetilde{A} - A}_\text{op} \leq \gamma \delta$,
$\norm{\widetilde{B} - B}_F \leq \beta \delta$,
$\norm{B}_F \leq R$,
$\norm{\widetilde{A} v} \geq \alpha_1 \norm{v}$ and $v^T \widetilde{A} v \geq 0$, so
$\norm{\widehat{A} v} \geq (\alpha_1 + \epsilon) \norm{v}$,
$\norm{A v} \geq \alpha_2 \norm{v}$
then
\begin{equation}
\norm{\widehat{J} - J}_F \leq \frac{\beta \delta}{\alpha_1 + \epsilon} + \frac{R\big(\gamma \delta + \epsilon\big)}{\big(\alpha_1 + \epsilon\big) \alpha_2}
\end{equation}
\end{thm}

\begin{proof}
We observe that from singular value decomposition
\begin{equation}
\norm{\widetilde{A} v} \geq \alpha_1 \norm{v}, v^T \widetilde{A} v \geq 0 \implies \norm{\widetilde{A} + \epsilon I}_\text{op} \geq \alpha_1 + \epsilon
\end{equation}
which gives
\begin{align*}
\norm{\widehat{J} - J}_F &\leq \norm{\widehat{A}^{-1}}_\text{op} \norm{\widetilde{B} - B} + \norm{\widehat{A}^{-1} - A^{-1}}_\text{op} \norm{B} \\
&\leq \norm{\widehat{A}^{-1}}_\text{op} \norm{\widetilde{B} - B} + \norm{\widehat{A}^{-1}}_\text{op} \norm{\widehat{A} - A}_\text{op} \norm{A^{-1}}_\text{op} \norm{B} \\
&\leq \norm{\widehat{A}^{-1}}_\text{op} \norm{\widetilde{B} - B} + \norm{\widehat{A}^{-1}}_\text{op} \norm{\widetilde{A} + \epsilon I - A}_\text{op} \norm{A^{-1}}_\text{op} \norm{B} \\
&\leq \norm{\widehat{A}^{-1}}_\text{op} \norm{\widetilde{B} - B} + \norm{\widehat{A}^{-1}}_\text{op} \left( \norm{\widetilde{A} - A}_\text{op} + \norm{\epsilon I}_\text{op}\right) \norm{A^{-1}}_\text{op} \norm{B} \\
& \leq \frac{\beta}{\alpha_1 + \epsilon} \delta + \frac{R}{\big(\alpha_1 + \epsilon\big) \alpha_2} \left( \gamma \delta + \epsilon \right)
\end{align*}
which gives
\begin{equation}
\norm{\widehat{J} - J}_F \leq \frac{\beta \delta}{\alpha_1 + \epsilon} + \frac{R\big(\gamma \delta + \epsilon\big)}{\big(\alpha_1 + \epsilon\big) \alpha_2}.
\end{equation}
\end{proof}

\newpage
\section{COMPUTATIONAL COMPLEXITY}\label{sec:app:comp_complex}

\paragraph{Evaluating $A \otimes I$ Efficiently}
The product $A \otimes I$ features in the Hessian expression
\begin{equation*}
H_p \zs = \left[ \big(D_\zs k \big)^{-1} \otimes I \right] \left[ \dots \right]
\end{equation*}

Given the expression $M = \big(A \otimes I \big) C$,  let $A \in \R{m \times n}$, $C \in \R{(n p) \times r}$, such that
\[
C = \begin{bmatrix}
C_1 \in \R{p \times r} \\
\vdots \\
C_n \in \R{p \times r}
\end{bmatrix}
\]

Let $M = \big(A \otimes I\big) C$ and define $\widetilde{C} \in \R{n \times p \times r}$ s.t. $\widetilde{C}_{i**} \in \R{p \times r} ~~ \forall i \in [1..n]$, $\widetilde{M} \in \R{m \times p \times r}$ s.t. $\widetilde{M}_{j**} \in \R{p \times r} ~~ \forall j \in [1..m]$---a 3-d representation of $C$ and $M$ where the stacked matrices in $C$, $M$ are concatenated along a third, first, dimension in $\widetilde{C}$, $\widetilde{M}$.

The operation can now be defined formally in Einstein summation notation as
\begin{equation*}
\big(\widetilde{M} \big)_{ijk} = \big(A\big)_{il} \big(\widetilde{C}\big)_{ljk}
\end{equation*}

Intuitively, the operation $M = \big(A \otimes I \big)$ corresponds to a matrix multiplication performed for every vector in $C$ built from $n$ elements, one from each $C_i$.

\paragraph{Evaluating $I \otimes B$ Efficiently}

The product $I \otimes B$ features in the Hessian expression
\begin{equation*}
H_p \zs = \left[ \dots \right] \left[ \dots + \big(I \otimes \big(D_p \zs \big)^T \big) \big( D_{\zs p} k\big) + \dots \right]
\end{equation*}

Given the expression $M = \big(I \otimes B \big) C$,  let $A \in \R{m \times n}$, $C \in \R{(p n) \times r}$, such that
\[
C = \begin{bmatrix}
C_1 \in \R{n \times r} \\
\vdots \\
C_p \in \R{n \times r}
\end{bmatrix}
\]

Let $M = \big(I \otimes B\big) C$ and define $\widetilde{C} \in \R{p \times n \times r}$ s.t. $\widetilde{C}_{i**} \in \R{n \times r} ~~ \forall i \in [1..n]$, $\widetilde{M} \in \R{p \times m \times r}$ s.t. $\widetilde{M}_{j**} \in \R{m \times r} ~~ \forall j \in [1..p]$---a 3-d representation of $C$ and $M$ where the stacked matrices in $C$, $M$ are concatenated along a third, first, dimension in $\widetilde{C}$, $\widetilde{M}$.

The operation can now be defined formally in Einstein summation notation as
\begin{equation*}
\big(\widetilde{M} \big)_{ijk} = \big(A\big)_{jl} \big(\widetilde{C}\big)_{ilk}
\end{equation*}

Intuitively, the operation $M = \big(I \otimes B \big)$ corresponds to a matrix multiplication performed by broadcasting $B$ and performing matrix multiplication of $B$ with every $p$ element of $C$, so that $B C_i ~~ \forall i \in [1..p]$---these products are then stacked together to form $M$.

\newpage
\section{ADDITIONAL INSIGHTS}
\paragraph{Convexity-in-parameter}

Global convexity-in-parameter is obtained if the second-order order derivative of the upper loss function w.r.t. to the parameter is globally positive semi-definite (PSD) while both the objective functions are globally twice continuously differentiable. Recalling the full derivative expressions
\begin{equation*}
\begin{aligned}
H f_U(\zs, p) = H_p f_U + \big(D_p \zs)^T \big(H_\zs f_U\big) \big(D_p \zs) + \\
+ \big(\big(D_\zs f_U\big) \otimes I\big) H_p \zs
\end{aligned}
\end{equation*}
we conclude that it is not possible to guarantee the global PSD property for this Hessian in general, because of the reduction $\big(\big(D_\zs f_U\big) \otimes I\big) H_p \zs$ as outlined in Section \ref{sec:app:comp_complex}.\footnote{A stack of Hessian matrices are reduced by a weighted summation with unknown sign weights.} However, in the special case when $H_p \zs = 0$ and the problem is globally twice differentiable, global convexity is obtained, e.g., unconstrained quadratic programs with linear parameterizations.

\end{document}